\documentclass[reqno]{amsart}

\numberwithin{equation}{section} 
\raggedbottom

\usepackage{amsmath,amsfonts,amssymb,amsthm,latexsym}
\usepackage{enumerate}
\usepackage{cancel}
\usepackage{mathrsfs}
\usepackage{stackrel}

\usepackage{cases}
\usepackage[square,comma,numbers,sort&compress]{natbib}

\usepackage[colorlinks=true]{hyperref}
\hypersetup{urlcolor=blue, citecolor=red}


\newcounter{mnote}
 \setcounter{mnote}{0}



\theoremstyle{plain}
\newtheorem{theorem}{Theorem}[section]
\newtheorem{proposition}[theorem]{Proposition}
\newtheorem{lemma}[theorem]{Lemma}

\theoremstyle{definition}

\theoremstyle{remark}
\newtheorem{remark}[theorem]{Remark}


\newcommand{\field}[1]{\mathbb{#1}}
\newcommand{\nN}{\field{N}}

\newcommand{\nR}{\field{R}}



\newcommand{\cP}{\mathcal P}




\newcommand{\vphi}{\varphi}

\newcommand{\sand}{\quad\text{and}\quad}



\newcommand{\pd}[2]{\frac{\partial #1}{\partial #2}}
\newcommand{\od}[2]{\frac{d #1}{d #2}}

\newcommand{\abs}[1]{\left\lvert#1\right\rvert}
\newcommand{\norm}[1]{\left\lVert#1\right\rVert}
\newcommand{\set}[1]{\left\{#1\right\}}

\newcommand{\LpP}[1]{\text{$L^{#1}$}}
\newcommand{\HpP}[1]{\text{$H^{#1}$}}

\begin{document}
\title[Data assimilation for the 2D B\'enard convection]{Continuous Data Assimilation for A 2D B\'enard Convection System through Horizontal Velocity Measurements Alone}

\date{January 28, 2016}

%
\author{Aseel Farhat}
\address[Aseel Farhat]{Department of Mathematics\\
               University of Virginia\\
       Charlottesville, VA 22904, USA}
\email[Aseel Farhat]{af7py@virginia.edu} 
\author{Evelyn Lunasin}
\address[Evelyn Lunasin]{Department of Mathematics\\
                United States Naval Academy\\
        Annapolis, MD, 21401 USA}
\email[Evelyn Lunasin]{lunasin@usna.edu}

\author{Edriss S. Titi}
\address[Edriss S. Titi]{Department of Mathematics, Texas A\&M University, 3368 TAMU,
 College Station, TX 77843-3368, USA.  {\bf ALSO},
  Department of Computer Science and Applied Mathematics, Weizmann Institute
  of Science, Rehovot 76100, Israel.} \email{titi@math.tamu.edu and
  edriss.titi@weizmann.ac.il}

\begin{abstract}
In this paper we propose a continuous data assimilation (downscaling) algorithm for a two-dimensional B\'enard convection problem.  Specifically we consider the two-dimensional Boussinesq system of a layer of incompressible fluid between two solid horizontal walls, with no-normal flow and stress free boundary condition on the walls, and fluid is heated from the bottom and cooled from the top. In this algorithm, we incorporate the observables as a feedback (nudging) term in the evolution equation of the {\it horizontal} velocity. We show that under an appropriate choice of the nudging parameter and the size of the spatial coarse mesh observables, and under the assumption that the observed data is error free, the solution of the proposed algorithm converges at an exponential rate, asymptotically in time, to the unique exact unknown reference solution of the original system, associated with the observed data on the horizontal component of the velocity.
Moreover, we note that in the case where the observational measurements are not error free, one can estimate the error between the solution of the algorithm and the exact reference solution of the system in terms of the error in the measurements.
\end{abstract}

 \maketitle
 {\bf MSC Subject Classifications:} 35Q30, 93C20, 37C50, 76B75, 34D06. \\
{\bf Keywords:} B\'enard convection,  Boussinesq system, continuous data assimilation, signal synchronization, nudging, downscaling.\\
\section{Introduction}\label{intro}
The B\'enard convection problem is a model of the Boussinesq convection system of an incompressible fluid layer, confined between two solid walls,  which is heated from below in such a way that the lower wall maintains a temperature $T_0$ while the upper one maintains a temperature $T_1<T_0$. In this case, after some change of variables and proper scaling (by normalizing the distance between the walls and the temperature difference), the two-dimensional Boussinesq equations that govern the velocity, pressure and temperature are 

\begin{subequations}\label{Bous}
\begin{align}
&\pd{u}{t} - \nu\Delta u + (u\cdot\nabla)u + \nabla p' = \theta \mathbf{e}_2, \label{Bous1}\\
&\pd{\theta}{t} - \kappa\Delta\theta + (u\cdot\nabla)\theta - u\cdot{\bf e}_2=0, \label{Bous2}\\
&\nabla\cdot u= 0,\label{Bous_div}\\
&u(0;x_1,x_2) = u_0(x_1,x_2), \quad \theta(0;x_1,x_2)=\theta_0(x_1,x_2).\label{Bous_initial} 
\end{align}
In this paper, we will consider the above system with the following boundary conditions at walls:
\begin{align}
\text{No-normal flow at the wall: } u_2, \theta=0 \quad \text{at} \quad x_2=0 \quad \text{and} \quad x_2=1, \label{boundary1} \\
\text{Stress free at the wall: } \pd{u_1}{x_2}= 0 \quad \text{at} \quad x_2=0 \quad \text{and} \quad x_2=1, \label{boundary2}
\end{align}
and, for simplicity, we supplement the system with periodic boundary conditions in the horizontal direction: 
\begin{align}
& u, \theta, p \text{ are periodic, of period } L, \text{ in the }x_1\text{-direction}.\label{boundary3} 
\end{align}
\end{subequations}
Here, $u(t; x_1, x_2)=(u_1(t;x_1,x_2), u_2(t;x_1,x_2))$ is the fluid velocity, $p'=p'(t;x_1,x_2)$ is the modified pressure given by $p' = p - \left(x_2 +\frac{x_2^2}{2}\right)$ where $p=p(t;x_1,x_2)$ is the pressure of the fluid in the box $\Omega$, $\theta=\theta(t;x_1,x_2)$ is the scaled fluctuation of the temperature around the steady state background temperature profile $(T_1-T_0)x_2+T_0$  and it is given by $\theta = T- (\frac{T_0}{T_0-T_1}-x_2)$, where $T=T(t;x_1,x_2)$ is the temperature of the fluid inside the box $\Omega$, $\kappa$ and $\nu$ are the thermal diffusivity and kinematic viscosity, respectively.

The mathematical analysis of the B\'enard convection system \eqref{Bous} has been studied in \cite{Foias_Manley_Temam} (see also \cite{Temam_1997}), where the existence and uniqueness of weak solution in dimension two and three were proved, along with the existence of a finite-dimensional global attractor was also established in space dimension two. We remark that the analysis in \cite{Foias_Manley_Temam} was done considering the boundary conditions \eqref{boundary3} and Dirichlet boundary conditons for $u$ and $\theta$ at the top and the bottom boundaries. The authors in \cite{Foias_Manley_Temam} remarked that one can handle other natural boundary conditions following similar analysis by simply modifying the definition of the function spaces. Different cases for boundary conditions were discussed in \cite{Foias_Manley_Temam}. The special boundary conditions case we are considering above can be handled similarly. 

\subsection{Equivalent formulation of the B\'enard problem as a periodic boundary conditions}
Next, we will show that the solution of the initial-boundary value problem \eqref{Bous} is equivalent to the solution of system \eqref{Bous1}--\eqref{Bous2} subject to periodic boundary conditions with specific symmetry.

Consider any smooth solution $u =(u_1, u_2)$ of \eqref{Bous} and perform an even extension of the horizontal component $u_1$ across the boundary $x_2=0$: 
\begin{align}\label{sym_1}
u_1(x_1, x_2) = u_1(x_1,-x_2), \quad \text{for } (x_1,x_2) \in (0,L)\times(-1,0). 
\end{align}
This, with the divergence free condition, imposes an odd extension of the vertical component $u_2$ across the boundary $x_2=0$: 
\begin{align}\label{sym_2}
u_2(x_1,x_2) = -u_2(x_1,-x_2), \quad \text{for } (x_1,x_2) \in (0,L)\times(-1,0). 
\end{align}
The above extensions yield an odd extension on $\theta$ across the boundary $x_2=0$: 
\begin{align}\label{sym_3}
\theta(x_1,x_2) = -\theta(x_1,-x_2), \quad \text{for } (x_1,x_2) \in (0,L)\times(-1,0). 
\end{align}
With the view of the above extensions and the original boundary conditions \eqref{boundary1}--\eqref{boundary3}, we have 
\begin{subequations}\label{sym_4}
\begin{align}
u_2= \theta = 0, \quad \text{at } x_2 =-1,1, \label{sym_4a}
\end{align} and \begin{align}
\pd{u_1}{x_2} =0,  \quad \text{at } x_2 =-1,1. \label{sym_4b}
\end{align}
\end{subequations}

It is important to observe that this space of periodic functions with the specific symmetries \eqref{sym_1}--\eqref{sym_4} is invariant under the
solution operator of the B\'enard equations \eqref{Bous1}--\eqref{Bous_div} subject to periodic boundary conditions. It is also clear that such symmetric solutions satisfy the physical boundary conditions \eqref{boundary1}--\eqref{boundary3}. Hence all the results we obtain for the periodic case with
symmetry are equally valid for the physical problem. 

Based on this remark, in the rest of this paper, we will consider the B\'enard problem on the extended fully periodic domain $\Omega=(0,L)\times(-1,1)$ with the symmetries \eqref{sym_1}--\eqref{sym_4}. 

\subsection{A continuous data assimilation algorithm using horizontal velocity measurements only}


Designing feedback control algorithms for dynamical systems has been the focus of many authors in the past decades, see, e.g., \cite{CKTi, Leunberger1971, Thau1973, Nijmeijer2001} and references therein. In the context of meteorology and atmospheric physics, feedback control algorithms with a data assimilation prospective has been studied, e.g.,  in \cite{Ghil1977, Ghil-Halem-Atlas1978}. A finite-dimensional feedback control algorithm for stabilizing solutions of infinite-dimensional dissipative evolution equations, such as reaction-diffusion systems, the Navier-Stokes equations and the Kuramoto-Sivashinsky equation has been proposed and studied in \cite{Azouani_Titi, CKTi} (see also \cite{Lunasin_Titi}). Based on this control algorithm of \cite{Azouani_Titi}, a continuous data assimilation algorithm, where the coarse mesh observational measurements of the full state variables are incorporated into the equations in the form of a linear feedback control term, was developed in \cite{Azouani_Olson_Titi}. 
The algorithm was designed to work for general {\it linear and nonlinear} dissipative dynamical systems and it can be outlined as follows: consider a general dissipative evolutionary equation
\begin{align}\label{ev_eq}
\od{u}{t} = F(u),
\end{align}
where the initial data $u(0)= u_0$ is missing. The algorithm is of the form
\begin{subequations}\label{du}
\begin{align}
&\od{v}{t} = F(v) - \mu (I_h(v)- I_h(u)), \\
&v(0)= v_0,
\end{align}
\end{subequations}
where $\mu>0$ is a relaxation (nudging) parameter and $v^0$ is taken to be arbitrary initial data. $I_h(\cdot)$ represents an interpolant operator based on the observational measurements of a system at a coarse spatial resolution of size $h$, for $t\in [ 0,T ]$. Notice that if system \eqref{du} is globally well-posed and $I_h(v)$ converge to $I_h(u)$ in time, then we recover the reference $u(t,x)$ from the approximate solution $v(t,x)$. The main task is to find estimates on $\mu>0$ and $h>0$ such that the approximate solution $v(t)$ is with increasing accuracy to the reference solution $u(t)$ as more continuous data in time is supplied. Notice that this algorithm requires measurement of {\it all} state variables of the dynamical system \eqref{ev_eq}. 

The continuous data assimilation in the context of the incompressible 2D Navier-Stokes equations (NSE) was studied in \cite{Azouani_Olson_Titi} under the assumption that the data is noise free. A computational study of this algorithm was later presented in \cite{Gesho_Olson_Titi}. The case when the observational data contains stochastic noise is treated in \cite{Bessaih-Olson-Titi}. Most recently an extension of this algorithm for the case of discrete spatio-temporal measurements with error is studied in \cite{FMTi}; in addition, it is also shown there how to implement this algorithm in order to extract statistical properties of the relevant solution.

In \cite{MTT2015}, the authors analyzed an algorithm for continuous data assimilation for 3D Brinkman-Forchheimer-extended Darcy (3D BFeD) model of a porous medium, a model equation when the velocity is too large for classical Darcy's law to be valid. Furthermore, in \cite{ALT2014}, the proposed data assimilation algorithm was also applied to several three-dimensional subgrid scale turbulence models.

Analyzing the validity and success of a data assimilation algorithm when some state variable observations are not available is an important problem meteorology and engineering \cite{Charney1969, Hoke-Anthes1976, Ghil1977, Ghil-Halem-Atlas1978} (see also \cite{Altaf}). In a recent work \cite{FJT}, a continuous data assimilation scheme for the two-dimensional incompressible B\'enard convection problem was introduced. The data assimilation algorithm in \cite{FJT} constructs the approximate solutions for the velocity $u$ and temperature fluctuations $\theta$ using only the observational data, $I_h(u)$, of the velocity field and {\it without any measurements for the temperature (or density) fluctuations}. Inspired by the recent algorithm proposed in \cite{FJT}, we introduced an abridged dynamic continuous data assimilation for the 2D NSE in \cite{FLT2015}. The proposed algorithm in \cite{Azouani_Olson_Titi} for the 2D NSE requires measurements for the two components of the velocity vector field. On the other hand, in \cite{FLT2015}, we establish convergence results for an improved algorithm where the observational data needed to be measured and inserted into the model equation is reduced or subsampled. Our algorithm there requires observational measurements of {\it only one component} of the velocity vector field. An abridged data assimilation algorithm for the 3D Leray-$\alpha$ model, using observations in {\it only any two components and without any measurements on the third component of the velocity field}, was later analyzed in \cite{FLT2015_alpha}. In a more recent paper \cite{FLT_porous}, we proposed and analyzed a data assimilation algorithm for the 2D and 3D B\'enard convection problem in porous medium that employs measurements of the {\it temperature (or density) only}. 

Inspired the previous works \cite{FJT} and \cite{FLT2015}, in this paper we propose and analyze a continuous data assimilation algorithm for a B\'enard convection problem in the periodic box $\Omega = (0,L)\times(-1,1)$ with the symmetries \eqref{sym_1}--\eqref{sym_4}. Our algorithm requires measurements of {\it only the horizontal component of the velocity} to recover the full (velocity and temperature) reference solution of the 2D B\'enard convection problem. This algorithm is given by 
\begin{subequations}\label{DA_Bous}
\begin{align}
&\pd{v}{t} -\nu \Delta v + (v\cdot\nabla)v +\nabla \tilde p = \eta\mathbf{e}_2- \mu(I_h(v_1)-I_h(u_1)) {\bf e}_1,\\
&\pd{\eta}{t} -\kappa\Delta\eta - (v\cdot\nabla)\eta - v\cdot {\bf e}_2= 0, \\
&\nabla \cdot v = 0, \\
&v(0;x) = v_0(x), \quad \eta(0; x) = \eta_0(x), \label{DA_Bous_initial}
\end{align}
subject to the symmetries \eqref{sym_1}--\eqref{sym_2} on the solution $(v,\eta)$ with fully periodic boundary condition in the box $\Omega = (0,L)\times (-1,1)$. 
\end{subequations}
Here, $(v_0,\eta_0)$ can be taken arbitrary and $I_h$ is a linear interpolant operator of the measurements of the horizontal component of the velocity $u_1$. Two types of interpolants can be considered. 
One is to be given by a linear interpolant operator $I_h: \HpP{1} \rightarrow \LpP{2}$ satisfying
 the approximation property
\begin{align}\label{app}
\norm{\varphi - I_h(\varphi)}_{\LpP{2}}^2 \leq c_0h^2\norm{\varphi}_{\HpP{1}}^2, 
\end{align}
for every $\varphi \in \HpP{1}$, where $c_0>0$ is a dimensionless constant.
The other type is given by $I_h: \HpP{2}\rightarrow\LpP{2}$, together with
\begin{align}\label{app2}
\norm{\varphi - I_h(\varphi)}_{\LpP{2}}^2 \leq c_0h^2\norm{\varphi}_{\HpP{1}} + c_0^2h^4\norm{\varphi}_{\HpP{2}}^2,
\end{align}
for every $\varphi \in \HpP{2}$, where $c_0>0$ is a dimensionless constant. Notice that the interpolant operator $I_h(\varphi)$ (satisfying \eqref{app} or \eqref{app2}), with $\varphi$ satisfying the symmetry \eqref{sym_1}, will also satisfy the symmetry \eqref{sym_1}. 

One example of an interpolant observable that satisfies \eqref{app}, with $\varphi$ satisfying the symmetry \eqref{sym_1}, is the orthogonal projection onto the low Fourier modes with wave numbers $k$ such that $|k|\leq 1/h$. A more physical example is the volume elements that were studied in \cite{Azouani_Olson_Titi, Jones_Titi}. Examples of an interpolant observable that satisfies \eqref{app2} is given by the low Fourier modes and the measurements at a discrete set of nodal points in $\Omega$ (see Appendix A in \cite{Azouani_Olson_Titi}). Observe that one has to slightly modify the presentation given in \cite{Azouani_Olson_Titi, Jones_Titi} to fulfill the symmetry condition \eqref{sym_1}. 

In this paper, we give a rigorous justification that the solution of the data assimilation algorithm \eqref{DA_Bous},  $(v,\eta)$ is converging to the exact reference solution of the B\'enard convection problem $(u,\theta)$ subject to \eqref{sym_1}--\eqref{sym_4}. We provide explicit estimates on the relaxation (nudging) parameter $\mu$ and the spatial resolution $h$ of the observational measurements, in terms of physical parameters, that are needed in order for the proposed downscaling (data assimilation) algorithm to recover the reference solution under the assumption that the supplied data are error free.  An extension of algorithm \eqref{DA_Bous} for the case of discrete spatio-temporal measurements with error can be established by combining the ideas and tools reported in \cite{FMTi} with those presented here. 
\bigskip
\section{Preliminaries}\label{pre}

For the sake of completeness, this section presents some preliminary material and notation commonly used in the mathematical study of fluids, in particular in the study of the Navier-Stokes equations (NSE) and the Euler equations. For more detailed discussion on these topics, we refer the reader to \cite{Constantin_Foias_1988}, \cite{Robinson}, \cite{Temam_1995_Fun_Anal} and \cite{Temam_2001_Th_Num}.

We begin by defining function spaces corresponding to the relevant physical boundary conditions. We define $\mathcal{F}_1$ to be the set of trigonometric polynomials in $(x_1,x_2)$, with period $L$ in the $x_1$-direction, and are even with period $2$ in the $y$-direction. We define $\mathcal{F}_2$ to be the set of trigonometric polynomials in $(x_1,x_2)$ with period $L$ in the $x_1$-direction, and are odd with period $2$ in the $x_2$-direction. We denote the space of smooth vector-valued functions which incorporates the divergence-free condition by
\[\mathcal{V}:=\set{\phi\in\mathcal{F}_1\times\mathcal{F}_2: \; \nabla\cdot\phi=0}.\]

\begin{remark}
We will use the same notation indiscriminately for both scalar and vector Lebesgue and Sobolev spaces, which should not be a source of confusion.
\end{remark}

The closures of $\mathcal{V}$ and $\mathcal{F}_2$ in $L^2(\Omega)$ will be denoted by $H_0$ and $H_1$, respectively. $H_0$ and $H_1$ will be endowed  with the usual scalar product
\[(u,v)_{H_0}=\sum_{i=1}^2\int_{\Omega} u^i(x)v^i(x)\,dx
\sand
(\psi,\phi)_{H_1}=\int_{\Omega} \psi(x)\phi(x)\,dx, \]
and the associated norms $\norm{u}_{H_0} = (u,u)_{H_0}^{1/2}$ and $\norm{\phi}_{H_1} = (\phi,\phi)_{H_1}^{1/2}$, respectively. We denote the closures of $\mathcal{V}$ and $\mathcal{F}_2$ in $H^1(\Omega)$ by $V_0$ and $V_1$, respectively. $V_0$ and $V_1$ are Hilbert spaces endowed by the scalar product
\[((u,v))_{V_0}=(u,v)_{H_0} +\sum_{i,j=1}^2\int_{\Omega}\partial_ju_i(x)\partial_jv_i(x)\,dx
\] and 
\[((\psi,\phi))_{V_1}=\sum_{j=1}^2\int_{\Omega}\partial_j\psi(x)\partial_j\phi(x)\,dx, \]
and the associated norms $\norm{u}_{V_0} = ((u,u))_{V_0}^{1/2}$ and $\norm{\phi}_{V_1} = ((\phi,\phi))_{V_1}^{1/2}$, respectively.
\begin{remark} 
Since $\theta, u_2 \in V_1$ are zero at the boundary and are odd in the $x_2$-direction and periodic in the $x_1$-direction (with zero mean in the box), then by the Poincar\'e inequality \eqref{poincare}, $\norm{.}_{V_1}$ defines a norm on $V_1$. 
\end{remark} 

Let $D(A_0)= V_0\cap H^2(\Omega)$ and $D(A_1)= V_1\cap H^2(\Omega)$ and let $A_i: D(A_i) \rightarrow H_i$ be the unbounded linear operator defined by
\[ (A_i u, v)_{H_i} = ((u,v))_{V_i}, \qquad i = 0,1, \]
for all $u, v \in D(A_i)$. The operator $A_i$ is self-adjoint and $A_i^{-1}$ is a compact, non-negative, self-adjoint linear operator in $H_i$, for each $i=0,1$. Thus, there exists a complete orthonormal set of eigenfunctions $w_j^i$ in $H_i$ such that $A_iw_j^i= \lambda_j^iw_j^i$ where $0<\lambda_j^i\leq\lambda_{j+1}^i$ for $j\in \nN$ and each $i=0,1$. 
\begin{remark}\label{rmk23}
Notice that $A_1$ is a positive definite operator, while $A_0$ is a non-negative operator with finitely many eigenfunctions corresponding to the eigenvalue $\lambda =0$. Using Cauchy-Schwarz inequality and the elliptic regularity of the operator $A_0+I$, one can show that $\norm{u}_{H^2} \equiv \norm{u}_{L^2}+ \norm{A_0u}_{L^2}$. Moreover, in periodic boundary conditions, the operator $A_0 = -\Delta$. 
\end{remark}

We denote the Helmholtz-Leray projector from $L^2(\Omega)$ onto $H_0$ by $\cP_\sigma$ and the dual space of $V_i$ by $V_i^{'}$, for $i=0,1$. We define the bilinear map $B:V_0\times V_0 \rightarrow V_0^{'}$ by
$$\langle B(u,v), w\rangle_{V_0, V_0^{'}} = \int_\Omega(u\cdot\nabla)v,w\,dx, $$
for each $u, v,w \in V_0$,
and its scalar analogue
$\mathcal{B}:V_0\times V_1 \rightarrow V_1^{'}$ by
$$\langle\mathcal{B}(u,\theta), \phi\rangle_{V_1, V_1^{'}} = \int_\Omega(u\cdot\nabla)\theta,\phi\, dx, $$

These bilinear operators have the algebraic property
\begin{subequations}\label{prop1}
\begin{align}
\langle B(u,v), w\rangle_{V_0,V_0^{'}} = - \langle B(u,w),v\rangle_{V_0,V_0^{'}},\end{align}
and
\begin{align}
\langle \mathcal{B}(u,\theta),\phi\rangle_{V_1,V_1^{'}} = - \langle \mathcal{B}(u,\phi),\theta\rangle_{V_1,V_1^{'}},
\end{align}
\end{subequations}
for each $u \in V_0$ and $v, w \in V_0$ and $\theta, \phi \in V_1$. Consequently, the above bilinear maps also enjoy
the orthogonality property
\begin{align}\label{orth}
\langle B(u,v), v\rangle_{V_0,V_0^{'}} = 0, \qquad \text{and} \qquad \langle \mathcal{B}(u,\theta), \theta\rangle_{V_1,V_1^{'}} = 0,
\end{align}
for each $u\in V_0$, $v\in V_0$ and $\theta\in V_1$.
Also, in two dimensions (under periodic boundary conditions), the bilinear operator $B( ., .)$ satisfies the following identities (see, e.g. \cite{Temam_2001_Th_Num}):  
\begin{subequations}
\begin{align}\label{per_orth}
(B(u,u), A_0u)_{H_0}= 0, 
\end{align}
for each $u\in \mathcal{D}(A_0)$, and consequently 
\begin{align}\label{per_orth_2}
(B(u,w),A_0 w)_{H_0} + (B(w,u), A_0 w)_{H_0} + (B(w,w), A_0 u)_{H_0} = 0, 
\end{align}
\end{subequations}
for each $u$ and $w\in \mathcal{D}(A_0)$.

Employing the above notation, we write the incompressible two-dimensional B\'enard convection problem \eqref{Bous} in the functional form
\begin{subequations}\label{Bous_fun}
\begin{align}
&\od{u}{t} + \nu A_0u + B(u,u) = \cP_\sigma (\theta {\bf e}_2), \label{Bous_fun_1} \\
& \od{\theta}{t} + \kappa A_1 \theta + \mathcal{B}(u,\theta) - u \cdot{\bf e}_2= 0, \label{Bous_fun_2}\\
&u(0;x) = u_0(x), \quad \theta(0;x) = \theta_0(x).
\end{align}
\end{subequations}

Next, we recall the two-dimensional Ladyzhenskaya inequality 
\begin{equation}\label{L4_to_H1}
\|\vphi\|_{\LpP{4}}^2\leq c\|\vphi\|_{\LpP{2}}\norm{\vphi}_{\HpP{1}}, \quad \mbox{for every } \vphi \in \HpP{1}, 
\end{equation}
where $c$ is a  dimensionless, positive constant.
Hereafter, $c$ denotes a generic constant which may change from line to line. We also observe that we have the Poincar\'e inequality:
\begin{subequations}\label{poincare}
\begin{align}
\|\vphi\|_{\LpP{2}}^2\leq \lambda_1^{-1}\|\nabla \vphi\|_{\LpP{2}}^2, &\quad \text{ for all } \varphi\in {V_1},  \label{poincare_1}\\
\|\vphi\|_{V_1}^2\leq \lambda_1^{-1}\|A_1\vphi\|_{\LpP{2}}^2, &\quad \text{ for all } \varphi\in \mathcal{D}(A_1), \label{poincare_2}
\end{align}
\end{subequations}
where $\lambda_1$ is the smallest eigenvalues of the operator $A_1$.
\begin{remark}\label{poincare_velocity}
Notice that for $w =(w_1,w_2) \in V_0$ one has $w_2\in V_1$ and thus the Poincar\'e inequality is only valid for $w_2$, i.e. 
\begin{align*}
\|w_2\|_{\LpP{2}}^2\leq \lambda_1^{-1}\|\nabla w_2\|_{\LpP{2}}^2, \quad\text{and}\quad \|\nabla w_2\|_{\LpP{2}}^2\leq \lambda_1^{-1}\|A_1w_2\|_{\LpP{2}}^2= \lambda_1^{-1}\|\Delta w_2\|_{\LpP{2}}^2. 
\end{align*}
This is not the case for the horizontal component $w_1$. On the other hand, thanks to the boundary conditions and the divergence free condition, we have 
\begin{align*} 
\norm{w_i}_{H^2}^2 &= \norm{w_i}_{\LpP{2}}^2 + \norm{\nabla w_i}_{\LpP{2}}^2 + \sum_{j,k = 1}^2 \norm{\partial_j \partial_k w_i}_{\LpP{2}}^2 \notag \\
&\equiv \norm{w_i}_{\LpP{2}}^2 + \norm{\nabla w_i}_{\LpP{2}}^2 + \norm{\Delta w_i}_{\LpP{2}}^2, 
\end{align*}
for $i=1,2$. More precisely, $\norm{w_i}_{H^2}^2 = \norm{w_i}_{\LpP{2}}^2+ 2\norm{\Delta w_i}_{\LpP{2}}^2$, for $i=1,2$. 
 \end{remark}

Next, we will prove a lemma that we will use later in our analysis. 
\begin{lemma}\label{div_lemma} 
Let $u=(u_1, u_2) \in V_0$, then  
\begin{align}
\norm{u_2}_{\LpP{2}}^2 \leq \norm{\nabla u_1}_{\LpP{2}}^2. 
\end{align}
\end{lemma}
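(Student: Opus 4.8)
The plan is to turn the claim into a one-dimensional (vertical) Poincaré inequality by eliminating $u_2$ in favor of $u_1$ through incompressibility. Since $u\in V_0$ is divergence free, $\partial_2 u_2=-\partial_1 u_1$ pointwise, whence $\norm{\partial_2 u_2}_{\LpP{2}}=\norm{\partial_1 u_1}_{\LpP{2}}\leq\norm{\nabla u_1}_{\LpP{2}}$. It therefore suffices to prove the vertical estimate $\norm{u_2}_{\LpP{2}}^2\leq\norm{\partial_2 u_2}_{\LpP{2}}^2$, i.e.\ a Poincaré inequality in the $x_2$ variable with constant at most one.

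For this I would freeze $x_1$ and examine the slice $x_2\mapsto u_2(x_1,x_2)$. By the boundary condition \eqref{sym_4a} it vanishes at $x_2=\pm1$, and being odd in $x_2$ by \eqref{sym_2} it also vanishes at the midline $x_2=0$. Hence on each half-interval $(0,1)$ and $(-1,0)$ the slice is a Dirichlet function on an interval of length one, so the sharp one-dimensional Poincaré inequality gives $\int_0^1 u_2^2\,dx_2\leq\pi^{-2}\int_0^1(\partial_2 u_2)^2\,dx_2$ and likewise on $(-1,0)$. Summing these and integrating in $x_1$ over $(0,L)$ yields $\norm{u_2}_{\LpP{2}}^2\leq\pi^{-2}\norm{\partial_2 u_2}_{\LpP{2}}^2$; combined with the first step and the fact $\pi^{-2}<1$, this closes the argument.

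The one genuinely quantitative point --- and the only place where care is needed --- is the size of the Poincaré constant. A naive estimate by the fundamental theorem of calculus and Cauchy--Schwarz (or a single integration by parts against the weight $x_2$) yields a constant of order $2$ to $4$, which is too large for the stated inequality; what rescues the bound is the sharp first-eigenvalue constant, and exploiting the oddness of $u_2$ (so that it vanishes at $x_2=0$, not merely at the walls) pushes it comfortably below one. As a self-contained alternative that pins the constant down explicitly, I would expand $u_1$ and $u_2$ in the bases $e^{2\pi i n x_1/L}\cos(\pi k x_2)$ and $e^{2\pi i n x_1/L}\sin(\pi k x_2)$ dictated by the symmetries; the divergence-free relation then forces $\widehat{u_2}(n,k)=-\tfrac{2in}{Lk}\,\widehat{u_1}(n,k)$ for $k\geq1$, and a term-by-term comparison with $\norm{\nabla u_1}_{\LpP{2}}^2\geq\norm{\partial_1 u_1}_{\LpP{2}}^2$ produces the explicit factor $\pi^{-2}k^{-2}\leq\pi^{-2}$, confirming the inequality with room to spare.
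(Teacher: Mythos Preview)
Your argument is correct and in fact yields a sharper bound, $\norm{u_2}_{\LpP{2}}^2\le\pi^{-2}\norm{\nabla u_1}_{\LpP{2}}^2$, than the lemma requires. The paper's proof and yours share the same first step (replacing $\partial_2 u_2$ by $-\partial_1 u_1$ via incompressibility), but differ in how the vertical Poincar\'e inequality is extracted. The paper uses precisely the ``naive'' route you dismiss: from $u_2(x_1,0)=0$ one writes $u_2(x_1,x_2)=-\int_0^{x_2}\partial_1 u_1\,ds$, applies Cauchy--Schwarz with $|x_2|\le 1$ to get $|u_2(x_1,x_2)|^2\le\int_0^1|\partial_1 u_1(x_1,s)|^2\,ds$, and then integrates in $x_2\in(0,1)$ and uses the even/odd symmetries to pass between the half-strip and the full strip. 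This yields the constant $1$, not ``of order $2$ to $4$'' --- your parenthetical concern is unfounded, and the paper's proof is entirely elementary, needing no spectral information or Fourier expansion. What your approach buys is the sharper constant $\pi^{-2}$ (by invoking the first Dirichlet eigenvalue on each half-interval, which indeed requires the extra vanishing at $x_2=\pm1$), at the cost of importing the sharp one-dimensional Poincar\'e constant or carrying out the Fourier computation. For the purposes of the paper the constant $1$ suffices, so the simpler argument is preferable there; your version would be the right choice if the size of the constant mattered downstream.
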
 
\begin{proof}
Since $u\in V_0$, then  $u=(u_1, u_2) \in \HpP{1}$ and it satisfies the symmetries \eqref{sym_1} and \eqref{sym_2}, in particular, $u_2(x_1,0) =0$. Moreover, $u$  satisfies the divergence free condition $\nabla\cdot u=0$ in 
$\LpP{2}
$. Therefore, one has
\begin{align*}
u_2(x_1,x_2)&= u_2(x_1,0) + \int_0^{x_2} \pd{u_2}{x_2}(x_1,s)\,ds= -\int_0^{x_2} \pd{u_1}{x_1}(x_1,s)\,ds. 
\end{align*}
By Cauchy-Schwarz inequality, we get 
\begin{align*}
\abs{u_2(x_1,x_2)}&= \abs{\int_0^{x_2} \pd{u_1}{x_1}(x_1,s)\,ds}\leq \left( \int_0^1\abs{\pd{u_1}{x_1}(x_1,s)}^2\,ds\right)^{1/2}. 
\end{align*}
Thus, 
\begin{align*}
\int_0^L\abs{u_2(x_1,x_2)}^2\,dx_1 &\leq \int_0^L\int_0^1\abs{\pd{u_1}{x_1}(x_1,s)}^2\,dx_1 ds.
\end{align*}
This implies that 
\begin{align*}
\norm{u_2}_{\LpP{2}}^2 = \int_{-1}^1\int_0^L\abs{u_2(x_1,x_2)}^2\,dx_1ds &=2  \int_{0}^1\int_0^L\abs{u_2(x_1,x_2)}^2\,dx_1ds\notag \\
& \leq 2\int_0^L\int_0^1\abs{\pd{u_1}{x_1}(x_1,s)}^2\,dx_1 ds\notag \\
&  = \int_0^L\int_{-1}^1\abs{\pd{u_1}{x_1}(x_1,s)}^2\,dx_1 ds\leq \norm{\nabla u_1}_{\LpP{2}}^2. 
\end{align*}
\end{proof}
We will apply the following inequality which is a particular case of a more general inequality proved in \cite{Jones_Titi}.
\begin{lemma}\label{gen_gron_2}\cite{Jones_Titi} Let $\tau>0$ be fixed. Suppose that $Y(t)$ is an absolutely continuous function which is locally integrable and that it satisfies the following:
\begin{align*}
\od{Y}{t} + \alpha(t) Y \leq \beta(t),\qquad \text{ a.e. on } (0,\infty),
\end{align*}
such that 
\begin{align}\label{cond_1}
\liminf_{t\rightarrow\infty} \int_t^{t+\tau} \alpha(s)\,ds \geq \gamma, \qquad 
\limsup_{t\rightarrow\infty} \int_t^{t+\tau}  \alpha^{-}(s)\,ds < \infty,
\end{align}
and
\begin{align}
\lim_{t\rightarrow \infty} \int_t^{t+\tau} \beta^{+}(s)\,ds = 0, 
\end{align}
for some $\gamma>0$, where $\alpha^{-} = \max\{-\alpha, 0\}$ and $\beta^{+} = \max\{\beta,0\}$.
Then, $Y(t)\rightarrow 0$ at an exponential rate, as $t\rightarrow \infty$.
\end{lemma}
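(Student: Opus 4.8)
The plan is to use the integrating-factor (Duhamel) representation and then upgrade the two averaged hypotheses \eqref{cond_1} into genuine pointwise decay information. First I would fix a time $t_0$ large enough that, by the first condition in \eqref{cond_1}, $\int_t^{t+\tau}\alpha(s)\,ds \ge \gamma/2$ for all $t \ge t_0$, and, by the second condition, $\int_t^{t+\tau}\alpha^-(s)\,ds \le M$ for all $t \ge t_0$ and some finite $M$. Multiplying the differential inequality by the integrating factor $\exp\!\left(\int_{t_0}^t \alpha(r)\,dr\right)$ and integrating on $[t_0,t]$ yields the Duhamel-type bound
\begin{align*}
Y(t) \le e^{-\int_{t_0}^t \alpha(r)\,dr}\, Y(t_0) + \int_{t_0}^t e^{-\int_s^t \alpha(r)\,dr}\,\beta(s)\,ds,
\end{align*}
after which I would bound $\beta$ by $\beta^+$ in the last integral (the kernel being positive).

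The heart of the argument is a linear-in-time lower bound for the accumulated integral $\int_s^t \alpha(r)\,dr$. For $t_0 \le s \le t$, write $t-s = n\tau + \rho$ with $n = \lfloor (t-s)/\tau\rfloor \in \nN$ and $0 \le \rho < \tau$. Summing the lower bound $\gamma/2$ over the $n$ consecutive windows of length $\tau$ contained in $[s,t]$, and bounding the contribution of the leftover window of length $\rho<\tau$ from below by $-\int \alpha^- \ge -M$, gives
\begin{align*}
\int_s^t \alpha(r)\,dr \ge \frac{\gamma}{2}\,n - M \ge \frac{\gamma}{2\tau}(t-s) - \Big(\frac{\gamma}{2}+M\Big).
\end{align*}
This converts the averaged condition on $\alpha$ into genuine exponential control, with rate $\gamma/(2\tau)$, of both the homogeneous factor and the Duhamel kernel.

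With this in hand the two terms are treated separately. The homogeneous term is bounded by $C\,e^{-\frac{\gamma}{2\tau}(t-t_0)}\abs{Y(t_0)}$ and decays exponentially. For the forcing term I would bound the kernel by $C e^{-\frac{\gamma}{2\tau}(t-s)}$ and exploit $\lim_{t\to\infty}\int_t^{t+\tau}\beta^+(s)\,ds = 0$: given $\varepsilon>0$, choose $T \ge t_0$ so that $\int_s^{s+\tau}\beta^+ < \varepsilon$ for all $s \ge T$, split the integral at $T$, estimate the initial piece on $[t_0,T]$ by the exponentially small prefactor $e^{-\frac{\gamma}{2\tau}(t-T)}$, and estimate the piece on $[T,t]$ by partitioning into $\tau$-windows and summing the resulting geometric series $\sum_{k\ge 0} e^{-\gamma k/2}$, which bounds it by $C\varepsilon$ uniformly in $t$. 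Letting $t\to\infty$ and then $\varepsilon\to 0$ shows the forcing term vanishes, so $\limsup_{t\to\infty} Y(t) \le 0$; combined with the nonnegativity of $Y$ available in the applications (where $Y$ is a squared norm), this gives $Y(t)\to 0$. When in addition $\beta^+ \equiv 0$ for large $t$, only the homogeneous term survives and the convergence is genuinely exponential, as asserted.

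The main obstacle is the accumulation step: correctly converting the fixed-window integral conditions \eqref{cond_1} into a uniform linear lower bound on $\int_s^t\alpha$ valid for \emph{all} $t_0 \le s \le t$, which requires careful bookkeeping of the partial leftover window through the $\alpha^-$ bound. Once the exponential kernel estimate is secured, the convolution argument for the forcing term is routine. I would also flag that the two-sided conclusion $Y\to 0$, rather than a mere one-sided upper bound, implicitly relies on $Y\ge 0$, which indeed holds throughout the intended applications.
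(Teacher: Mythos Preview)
The paper does not actually prove this lemma; it is stated as a quotation from \cite{Jones_Titi} (``a particular case of a more general inequality proved in \cite{Jones_Titi}'') and is invoked as a black box in the proof of Theorem~\ref{th_conv_1}. So there is no in-paper proof to compare against.

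Your argument is the standard one and is essentially correct: integrating factor, then telescoping over $\tau$-windows to convert the averaged lower bound on $\alpha$ into a linear lower bound on $\int_s^t\alpha$, with the $\alpha^-$ hypothesis handling the leftover partial window. Two remarks are worth making. First, you are right to flag that the bare conclusion $Y(t)\to 0$ (as opposed to merely $\limsup Y\le 0$) uses $Y\ge 0$; this is indeed how the lemma is applied in the paper, where $Y$ is a sum of squared norms. Second, and more substantively, the phrase ``at an exponential rate'' in the lemma is stronger than what your $\varepsilon$-splitting argument delivers: from $\lim\int_t^{t+\tau}\beta^+ = 0$ alone one only gets $Y(t)\to 0$, not exponential decay. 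Genuine exponential decay of $Y$ requires that $\int_t^{t+\tau}\beta^+$ itself vanish exponentially, which does hold in the paper's application (there $\beta(t)=cK_3^2\norm{\nabla\xi(t)}_{L^2}^2$, and the exponential bound \eqref{asym_bound} together with \eqref{3.32} forces $\int_t^{t+\tau}\beta^+$ to decay like $e^{-\gamma t}$). Your closing sentence already anticipates this distinction, so the proposal is sound provided you either (i) weaken the stated conclusion to $Y(t)\to 0$, or (ii) add the hypothesis that $\int_t^{t+\tau}\beta^+$ decays exponentially and feed that rate through the geometric-series estimate.
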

 
We also recall the following results from \cite{Foias_Manley_Temam, Temam_1997} for the B\'enard convection problem \eqref{Bous_fun}. These results were proved for a special case of boundary conditions: periodic in the $x_1$-direction and Dirichlet in the $x_2$-directions. The authors remarked that the analysis will follow similar steps for other natural boundary conditions. The same results hold for the boundary conditions we are considering in this paper: fully periodic boundary conditions with the symmetries \eqref{sym_1}--\eqref{sym_4}.   

\begin{theorem}[Existence and uniqueness of weak Solutions]
Let $T>0$ be fixed. Let $\nu>0$ and $\kappa>0$. If $u_0\in H_0$ and $\theta_0\in {H_1}$, then system \eqref{Bous_fun} has a unique weak solution $(u, \theta)$ such that $u\in C([0,T];H_0) \cap L^2([0,T];V_0)$ and $\theta\in C([0,T];{H_1}) \cap L^2([0,T], V_1)$. 
\end{theorem}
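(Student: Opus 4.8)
The plan is to construct the solution by the Galerkin method and then establish uniqueness by an energy argument, following the standard 2D Navier--Stokes template while carrying the buoyancy coupling throughout. First I would fix the orthonormal bases $\{w_j^0\}$ and $\{w_j^1\}$ of eigenfunctions of $A_0$ and $A_1$, let $P_m^0, P_m^1$ denote the orthogonal projections onto the first $m$ modes, and seek $u_m=\sum_{j=1}^m a_j(t)w_j^0$ and $\theta_m=\sum_{j=1}^m b_j(t)w_j^1$ solving the projected system. Since $B$ and $\mathcal{B}$ are quadratic, hence locally Lipschitz, in the coefficients, the Picard--Lindel\"of theorem gives a unique $C^1$ solution of the resulting finite ODE system on a maximal interval $[0,T_m)$.

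Next I would derive the a priori estimates that both close the system and show $T_m>T$. Testing the velocity equation against $u_m$ and the temperature equation against $\theta_m$, the orthogonality property \eqref{orth} annihilates the transport terms $\langle B(u_m,u_m),u_m\rangle$ and $\langle\mathcal{B}(u_m,\theta_m),\theta_m\rangle$, leaving
\begin{equation*}
\tfrac{1}{2}\od{}{t}\left(\norm{u_m}_{H_0}^2+\norm{\theta_m}_{H_1}^2\right)+\nu\norm{u_m}_{V_0}^2+\kappa\norm{\theta_m}_{V_1}^2 = 2(u_{m,2},\theta_m)_{\LpP{2}}.
\end{equation*}
The coupling term on the right is controlled by Cauchy--Schwarz and Young's inequality; since $u_{m,2}$ and $\theta_m$ both lie in $V_1$, Poincar\'e's inequality \eqref{poincare} lets one absorb part of it into the dissipation and treat the remainder by Gr\"onwall's inequality. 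This yields bounds on $(u_m,\theta_m)$ in $L^\infty([0,T];H_0\times H_1)\cap L^2([0,T];V_0\times V_1)$ that are uniform in $m$, so in particular the approximate solutions are global.

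With these bounds in hand I would pass to the limit. Using the 2D Ladyzhenskaya inequality \eqref{L4_to_H1} to estimate $B(u_m,u_m)$ in $V_0^{'}$ and $\mathcal{B}(u_m,\theta_m)$ in $V_1^{'}$, I would bound $\od{u_m}{t}$ in $L^2([0,T];V_0^{'})$ and $\od{\theta_m}{t}$ in $L^2([0,T];V_1^{'})$. The Aubin--Lions compactness lemma then produces a subsequence converging weakly in $L^2(V_0)\times L^2(V_1)$, weak-$*$ in $L^\infty(H_0)\times L^\infty(H_1)$, and \emph{strongly} in $L^2(H_0)\times L^2(H_1)$; the strong convergence is precisely what allows the passage to the limit in the nonlinear terms to identify the limit $(u,\theta)$ as a weak solution. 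The continuity in time, $u\in C([0,T];H_0)$ and $\theta\in C([0,T];H_1)$, then follows from the Lions--Magenes interpolation lemma applied to $u\in L^2(V_0)$ with $\od{u}{t}\in L^2(V_0^{'})$, and likewise for $\theta$.

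Finally, for uniqueness I would take two solutions, set $w=u^{(1)}-u^{(2)}$ and $\rho=\theta^{(1)}-\theta^{(2)}$, subtract the equations, and test against $w$ and $\rho$. The only terms not directly controlled by the dissipation are $\langle B(w,u^{(2)}),w\rangle$ and its scalar analogue; in two dimensions these are handled via \eqref{L4_to_H1}, e.g.\ $\abs{\langle B(w,u^{(2)}),w\rangle}\leq c\norm{w}_{\LpP{2}}\norm{w}_{V_0}\norm{u^{(2)}}_{V_0}$, after which Young's inequality absorbs $\norm{w}_{V_0}$ into the viscous term and Gr\"onwall's inequality (with $w(0)=\rho(0)=0$) forces $w\equiv 0$ and $\rho\equiv 0$. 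I expect the main technical point to be the closure of the a priori estimate for the coupled system---ensuring the buoyancy coupling $2(u_{m,2},\theta_m)$ does not overwhelm the dissipation---together with the dual-space bounds on the nonlinearities required by Aubin--Lions; once these are secured, the remaining steps are routine.
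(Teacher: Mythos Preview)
The paper does not actually prove this theorem: it is stated without proof and attributed to \cite{Foias_Manley_Temam, Temam_1997}, with the remark that the analysis there (done for periodic-in-$x_1$, Dirichlet-in-$x_2$ boundary conditions) carries over to the present symmetric periodic setting after modifying the function spaces. So there is no ``paper's own proof'' to compare against.

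That said, your proposal is correct and is precisely the Galerkin scheme carried out in those references. The structure---spectral Galerkin approximants, the energy identity with the $2(u_{m,2},\theta_m)$ buoyancy coupling controlled via Poincar\'e on the odd component $u_{m,2}\in V_1$, dual-space bounds on $B$ and $\mathcal{B}$ via Ladyzhenskaya, Aubin--Lions compactness, Lions--Magenes for continuity in time, and the 2D uniqueness argument---matches the standard treatment. One small point of care specific to this paper's conventions: since $A_0$ is only non-negative (Remark~\ref{rmk23}) and the horizontal component $u_{m,1}$ does not obey Poincar\'e (Remark~\ref{poincare_velocity}), you should be explicit that the Gr\"onwall step for the energy estimate uses the $L^2$ norm of $u_m$ directly rather than trying to absorb everything into dissipation; your sketch already allows for this, but it is the one place where the present boundary conditions differ from the Dirichlet case in \cite{Foias_Manley_Temam}.
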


It was also shown in \cite{Foias_Manley_Temam, Temam_1997} that  the 2D B\'enard convection system has a finite-dimensional global attractor.

\begin{theorem}[Existence of a global attractor]\label{global_attractor_Bous} Let $T>0$ be fixed. If the initial data $u_0\in V_0$ and $\theta_0\in{V_1}$, then system \eqref{Bous_fun} has a unique strong solution $(u,\theta)$ that satisfies $u\in C([0,T];V_0)\cap L^2([0,T];\mathcal{D}(A_0))$ and $\theta\in C([0,T];{V_1})\cap L^2([0,T];\mathcal{D}(A_1))$. Moreover, system \eqref{Bous_fun} is globally well-posed and possesses a finite-dimensional global attractor $\mathcal{A}$ which is maximal among all the bounded invariant sets, and is compact in $H_0\times {H_1}$.
\end{theorem}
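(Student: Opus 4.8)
The plan is to run the classical program for two-dimensional dissipative systems: build Galerkin approximations, derive a priori estimates at the energy ($H_0\times H_1$) and enstrophy ($V_0\times V_1$) levels that are uniform in the truncation, pass to the limit by compactness to get a strong solution with the stated regularity, establish uniqueness and continuous dependence by an energy argument so that \eqref{Bous_fun} generates a continuous semigroup $S(t)$, and then deduce the global attractor from an absorbing ball in $V_0\times V_1$ together with the compactness of $V_0\times V_1\hookrightarrow H_0\times H_1$. The finite-dimensionality is obtained afterward by a volume-contraction (trace) argument.

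First I would project \eqref{Bous_fun} onto the span of the first $m$ eigenfunctions of $A_0$ and $A_1$, solving the resulting ODE locally in time. For the basic estimate I test \eqref{Bous_fun_1} with $u$ and \eqref{Bous_fun_2} with $\theta$. The nonlinear terms drop by the orthogonality \eqref{orth}, and the two buoyancy couplings combine into $2(u_2,\theta)_{H_1}$. This term is the one genuinely coupled quantity and is \emph{not} sign-definite, reflecting the Rayleigh--B\'enard energy exchange. Using Lemma \ref{div_lemma} to replace $\norm{u_2}_{\LpP{2}}$ by $\norm{\nabla u_1}_{\LpP{2}}$ and the Poincar\'e inequality \eqref{poincare_1} on $\theta$, Young's inequality absorbs it into $\nu\norm{u}_{V_0}^2+\kappa\norm{\theta}_{V_1}^2$ and yields dissipativity in $H_0\times H_1$. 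To make this uniform in the physical parameters (rather than under a largeness condition on $\nu\kappa\lambda_1$), I would first obtain an independent bound on the temperature via a maximum-principle argument for the unscaled temperature, which decouples the buoyancy forcing; this produces an absorbing ball in $H_0\times H_1$ and, after integrating in time, the $L^2([0,T];V_0)\times L^2([0,T];V_1)$ bound.

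Next I would pair \eqref{Bous_fun_1} with $A_0u$ and \eqref{Bous_fun_2} with $A_1\theta$. The velocity nonlinearity vanishes by the two-dimensional identity \eqref{per_orth}, so the velocity equation is benign and its buoyancy term $(\theta\mathbf{e}_2,A_0u)_{H_0}$ is handled by Young. The genuine difficulty lies in the temperature equation, where $(\mathcal{B}(u,\theta),A_1\theta)_{H_1}$ has no analogue of \eqref{per_orth}. I would estimate it by the Ladyzhenskaya inequality \eqref{L4_to_H1} and interpolation, schematically
\[
\abs{(\mathcal{B}(u,\theta),A_1\theta)_{H_1}}\le c\,\norm{u}_{H_0}^{1/2}\norm{u}_{V_0}^{1/2}\norm{\theta}_{V_1}^{1/2}\norm{A_1\theta}_{H_1}^{3/2},
\]
and absorb $\norm{A_1\theta}_{H_1}^2$ into $\kappa\norm{A_1\theta}_{H_1}^2$ by Young, leaving a remainder polynomial in the energy- and enstrophy-level quantities. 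Feeding in the $L^2_t V_0$, $L^2_t V_1$ bounds from the previous step and applying a uniform Gr\"onwall argument (of the type in Lemma \ref{gen_gron_2}) gives the uniform-in-time $V_0\times V_1$ bound, hence the absorbing ball in $V_0\times V_1$ together with $u\in L^2([0,T];\mathcal{D}(A_0))$, $\theta\in L^2([0,T];\mathcal{D}(A_1))$. Passing to the limit via the Aubin--Lions lemma produces the strong solution with the claimed regularity, and uniqueness and continuous dependence follow by testing the difference equations in $H_0\times H_1$, bounding the nonlinear differences with \eqref{L4_to_H1}, and closing with Gr\"onwall.

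Finally, the continuity of $S(t)$ on $H_0\times H_1$, the absorbing ball in $V_0\times V_1$, and the compact embedding $V_0\times V_1\hookrightarrow H_0\times H_1$ give asymptotic compactness, so by the standard theory of dissipative semigroups the global attractor $\mathcal{A}$ exists, is compact in $H_0\times H_1$, and is maximal among bounded invariant sets. Finite-dimensionality I would establish by linearizing along trajectories on $\mathcal{A}$, estimating the sum of the first $N$ local Lyapunov exponents through a Lieb--Thirring inequality, and showing this sum becomes negative once $N$ exceeds a threshold expressed in $\nu,\kappa,\lambda_1,L$. I expect the main obstacle to be the temperature nonlinearity $(\mathcal{B}(u,\theta),A_1\theta)_{H_1}$, whose lack of an orthogonality structure forces the delicate two-dimensional interpolation estimate above, compounded by the non-cancelling buoyancy coupling $2(u_2,\theta)_{H_1}$ that must be controlled uniformly in the parameters.
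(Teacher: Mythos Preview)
The paper does not supply its own proof of this theorem: it is stated in the Preliminaries as a result recalled from \cite{Foias_Manley_Temam, Temam_1997}, with the remark that the analysis there (carried out for periodic-in-$x_1$, Dirichlet-in-$x_2$ boundary conditions) carries over to the fully periodic setting with the symmetries \eqref{sym_1}--\eqref{sym_4} by suitably modifying the function spaces. So there is no in-paper argument to compare against.

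Your outline is essentially the program carried out in those references: Galerkin truncation, energy and enstrophy a priori bounds, Aubin--Lions compactness, uniqueness by a difference estimate, an absorbing ball in $V_0\times V_1$ and compact embedding into $H_0\times H_1$ for the attractor, and a Lieb--Thirring trace estimate for finite dimensionality. The identification of $(\mathcal{B}(u,\theta),A_1\theta)_{H_1}$ as the term requiring the two-dimensional Ladyzhenskaya interpolation, and the use of the maximum principle (cf.\ Proposition~\ref{max_principle}) to decouple the buoyancy forcing and obtain parameter-uniform absorbing sets, are both in line with the cited works. One minor point: your invocation of Lemma~\ref{div_lemma} at the $H_0\times H_1$ energy level is not needed and is somewhat circular in spirit---the coupling $2(u_2,\theta)_{H_1}$ is controlled directly by Cauchy--Schwarz, Poincar\'e on $u_2\in V_1$ and on $\theta$, and Young; the cleaner route (and the one in the references) is to get the $L^\infty$ bound on $\theta$ first via the maximum principle, after which the buoyancy term in the velocity equation becomes a bounded forcing. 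Otherwise your sketch is sound.
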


We will use the following bounds on $(u,\theta)$ later in our analysis. 
\begin{proposition}[A variant of the maximum principle]\label{max_principle}
Let $(u,\theta)$ be a strong solution of \eqref{Bous_fun},
then
\begin{align*}
\theta(t;\cdot) = \tilde{\theta}(t;\cdot) + \bar{\theta}(t;\cdot),
\end{align*}
where $-1\leq\tilde{\theta}(t;x)\leq1$ and
$$\norm{\bar{\theta}(t)}_{{H_1}} \leq \left(\norm{(\theta_0-1)_+}_{{H_1}} + \norm{(\theta_0+1)_-}_{{H_1}}\right)e^{-\kappa t},$$
for all $x\in\Omega$ and $t>0$. Here $M_+ = \max\{M,0\}$ and $M_{-}= \max\{-M,0\}$ for any real number $M$.
\end{proposition}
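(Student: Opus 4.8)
The plan is to reduce the temperature equation \eqref{Bous2} to a genuine source-free advection--diffusion equation, for which a classical maximum principle produces the bounded part $\tilde\theta$, and then to quantify the decaying transient $\bar\theta$ by a truncation energy estimate. The decisive first step is to eliminate the buoyancy coupling $u\cdot\mathbf{e}_2=u_2$. Introducing the shifted variable $\rho:=\theta-x_2$ and using that $x_2$ is harmonic while $(u\cdot\nabla)x_2=u_2$, a direct computation turns \eqref{Bous2} into the homogeneous equation
\[\pd{\rho}{t}-\kappa\Delta\rho+(u\cdot\nabla)\rho=0,\]
the source being exactly cancelled by the advection of the subtracted linear background profile. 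On the physical strip $x_2\in(0,1)$ the wall condition $\theta=0$ becomes the constant Dirichlet data $\rho=0$ at $x_2=0$ and $\rho=-1$ at $x_2=1$, so the parabolic maximum principle confines $\rho$ to $[-1,0]$ up to a transient; since $\theta=\rho+x_2$ with $x_2\in[0,1]$, this is exactly the assertion $-1\le\tilde\theta\le1$ for the truncated part.

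Next I would quantify the transient by an energy estimate on the overshoot. Set $w:=(\rho)_+=\max\{\rho,0\}$, the excess of $\rho$ above its ceiling; by the wall values $w$ vanishes at $x_2=0,1$, so it is admissible in the Poincar\'e inequality \eqref{poincare_1}. Testing the homogeneous $\rho$--equation against $w$, the transport term integrates to zero by the incompressibility/no-flux orthogonality \eqref{orth} (recall $u_2=0$ on the walls), while Stampacchia's lemma, legitimized by the strong-solution regularity $\theta\in L^2([0,T];\mathcal{D}(A_1))$ of Theorem \ref{global_attractor_Bous}, gives $\int_\Omega(\partial_t\rho)\,w=\tfrac12\tfrac{d}{dt}\norm{w}_{\LpP{2}}^2$ and $-\int_\Omega(\Delta\rho)\,w=\norm{\nabla w}_{\LpP{2}}^2$. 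Hence
\[\tfrac12\tfrac{d}{dt}\norm{w}_{\LpP{2}}^2+\kappa\norm{\nabla w}_{\LpP{2}}^2=0,\]
and because the walls are a unit distance apart the relevant Dirichlet--Poincar\'e constant is at least $1$, so \eqref{poincare_1} yields $\tfrac{d}{dt}\norm{w}_{\LpP{2}}^2\le-2\kappa\norm{w}_{\LpP{2}}^2$ and therefore $\norm{w(t)}_{\LpP{2}}\le\norm{w(0)}_{\LpP{2}}\,e^{-\kappa t}$. The identical computation run on the undershoot $(\rho+1)_-$ controls the excursion below $-1$; collecting the two excursions into $\bar\theta$ and the truncation into $\tilde\theta$ produces the decomposition, the odd symmetry \eqref{sym_3} pairing the over- and undershoot and converting the initial overshoots into the stated data $\norm{(\theta_0-1)_+}_{H_1}$ and $\norm{(\theta_0+1)_-}_{H_1}$ (recall $\norm{\cdot}_{H_1}$ is an $\LpP{2}$ norm).

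The main obstacle is exactly the buoyancy source $u_2$ in \eqref{Bous2}: a naive truncation applied to $\theta$ itself produces $\int_\Omega u_2\,(\theta-1)_+\,dx$, which carries no favorable sign and cannot be absorbed into the dissipation --- estimating it through Lemma \ref{div_lemma} only introduces the non-decaying quantity $\norm{\nabla u_1}_{\LpP{2}}$, so decay to zero is lost. The substitution $\rho=\theta-x_2$ is what disposes of this term, trading the source for constant Dirichlet data, after which the overshoot obeys a clean dissipative balance. The remaining points are routine bookkeeping: confirming the truncations vanish on the walls so \eqref{poincare_1} applies, checking the nonlinear term drops via \eqref{orth}, using \eqref{sym_3} to move between the physical strip and the extended periodic box, and invoking Theorem \ref{global_attractor_Bous} to justify differentiating $\norm{(\cdot)_+}_{\LpP{2}}^2$.
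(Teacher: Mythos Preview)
The paper states Proposition~\ref{max_principle} without proof (it is quoted alongside Proposition~\ref{unif_bounds} as background on the reference solution), so there is no argument in the paper against which to compare. Your strategy---eliminate the source $u_2$ by the shift $\rho=\theta-x_2$, then run a Stampacchia truncation energy estimate on the resulting homogeneous advection--diffusion equation---is the standard route to such a result, and the derivation of $\tfrac{d}{dt}\norm{w}_{\LpP{2}}^2\le -2\kappa\norm{w}_{\LpP{2}}^2$ for $w=(\rho)_+$ is correct.

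There is, however, one loose end in the final identification of constants. Your overshoot at $t=0$ is $w(0)=(\rho_0)_+=(\theta_0-x_2)_+$ and your undershoot is $(\rho_0+1)_-=(\theta_0-x_2+1)_-$; these are \emph{not} the quantities $(\theta_0-1)_+$ and $(\theta_0+1)_-$ that appear in the stated bound. On the physical strip $x_2\in[0,1]$ one has pointwise $(\theta_0-x_2)_+\ge(\theta_0-1)_+$ and $(\theta_0-x_2+1)_-\ge(\theta_0+1)_-$, so your method delivers a prefactor that is \emph{larger} than the one recorded in the proposition. The appeal to the odd symmetry \eqref{sym_3} does not close this gap: reflecting the odd function $\rho$ across $x_2=0$ pairs $(\rho)_+$ with $(\rho)_-$, not with $(\theta\pm 1)_\pm$. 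In short, your argument does produce a valid decomposition $\theta=\tilde\theta+\bar\theta$ with $-1\le\tilde\theta\le 1$ and $\norm{\bar\theta(t)}_{H_1}$ decaying like $e^{-\kappa t}$, only with a (harmlessly) larger constant in front; since the sole use of Proposition~\ref{max_principle} downstream is the eventual uniform $L^\infty$ bound on $\theta$ (see estimate \eqref{2}), this discrepancy is immaterial for the rest of the paper, but you should not claim to have recovered the constants exactly as stated.
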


\begin{proposition}[Uniform bounds on the solutions]\label{unif_bounds}
Let $(u,\theta)$ be a strong solution of \eqref{Bous_fun}. There exists $t_0>0$, which depends on norms of the initial data, such that for all $t\geq t_0$,
\begin{align}
\norm{\theta(t)}_{{H_1}} &\leq a_0, \quad \text{and}\quad
\norm{u(t)}_{{H_0}}  \leq b_0, 
\end{align}
\begin{align}
&\int_t^{t+1}\norm{u(s)}_{{V_0}}^2\,ds \leq a_3, \quad
\int_t^{t+1}\norm{\theta(s)}_{{V_1}}^2\,ds \leq b_3,
\end{align}
\begin{align}
\norm{u(t)}_{{V_0}}^2 &\leq \left(a_2+a_3\right) e^{a_1} =: J_0, \label{J_0_eps}\\
\norm{\theta(t)}_{{V_1}}^2 & \leq (b_2+ b_3)e^{b_1} = : J_1, \label{J_1_eps}
\end{align}
where $a_0, a_1, a_2, a_3,, b_0, b_1, b_2, b_3, J_0$ and $J_1$ are positive constants that depend on $L, \nu, \mbox{and }  \kappa$. 
\end{proposition}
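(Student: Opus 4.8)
The plan is to establish the four groups of bounds in the order dictated by the dissipative structure of \eqref{Bous_fun}, using the energy method together with the uniform Gronwall-type Lemma~\ref{gen_gron_2}. The key that makes everything decouple is that the $\LpP2$ bound on $\theta$ is essentially free from the maximum principle. Indeed, by Proposition~\ref{max_principle} we may write $\theta=\tilde\theta+\bar\theta$ with $\abs{\tilde\theta}\le1$ pointwise and $\norm{\bar\theta(t)}_{H_1}$ decaying exponentially, so that $\norm{\theta(t)}_{H_1}\le\abs{\Omega}^{1/2}+\norm{\bar\theta(t)}_{H_1}\le a_0$ for all $t$ beyond some $t_0$. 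With this in hand I would take the $H_0$ inner product of \eqref{Bous_fun_1} with $u$; the nonlinear term drops out by the orthogonality \eqref{orth}, leaving $\tfrac12\frac{d}{dt}\norm{u}_{H_0}^2+\nu\norm{\nabla u}_{\LpP2}^2=\int_\Omega\theta u_2\,dx$. Using Lemma~\ref{div_lemma} (or the Poincar\'e inequality \eqref{poincare_1} on $u_2$) together with Young's inequality, the right-hand side is bounded by $\tfrac{\nu}{2}\norm{\nabla u}_{\LpP2}^2+\tfrac1{2\nu}a_0^2$, and coercivity of the dissipation plus Gronwall gives an absorbing ball $\norm{u(t)}_{H_0}\le b_0$ for $t\ge t_0$.

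Next I would derive the time-averaged $V$ bounds. Integrating the energy identity above over $[t,t+1]$ and inserting the pointwise bounds $a_0,b_0$ already obtained yields $\int_t^{t+1}\norm{u(s)}_{V_0}^2\,ds\le a_3$. The analogous identity for the temperature, obtained by testing \eqref{Bous_fun_2} with $\theta$ in $H_1$ and using $\langle\mathcal B(u,\theta),\theta\rangle=0$ from \eqref{orth}, reads $\tfrac12\frac{d}{dt}\norm{\theta}_{H_1}^2+\kappa\norm{\nabla\theta}_{\LpP2}^2=\int_\Omega u_2\theta\,dx$; integrating this over $[t,t+1]$ and again using Lemma~\ref{div_lemma} and the $\LpP2$ bounds produces $\int_t^{t+1}\norm{\theta(s)}_{V_1}^2\,ds\le b_3$.

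The final and hardest step is the pointwise $V$ bounds $J_0,J_1$, where I would test \eqref{Bous_fun_1} with $A_0u$ and \eqref{Bous_fun_2} with $A_1\theta$. In the velocity equation the dangerous nonlinear term vanishes by the two-dimensional periodic identity $(B(u,u),A_0u)_{H_0}=0$ of \eqref{per_orth}, leaving only the buoyancy contribution $(\theta\mathbf e_2,A_0u)_{H_0}$, which is absorbed against $\nu\norm{A_0u}_{\LpP2}^2$ by Cauchy--Schwarz and Young. The temperature equation is the true obstacle, since $(\mathcal B(u,\theta),A_1\theta)_{H_1}$ does not vanish; I would estimate it by $\norm{u}_{\LpP4}\norm{\nabla\theta}_{\LpP4}\norm{A_1\theta}_{\LpP2}$, apply Ladyzhenskaya \eqref{L4_to_H1} and interpolation in the forms $\norm{u}_{\LpP4}\le c\norm{u}_{\LpP2}^{1/2}\norm{u}_{V_0}^{1/2}$ and $\norm{\nabla\theta}_{\LpP4}\le c\norm{\theta}_{V_1}^{1/2}\norm{A_1\theta}_{\LpP2}^{1/2}$, and use Young to absorb the top-order factor into $\kappa\norm{A_1\theta}_{\LpP2}^2$. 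Adding the two resulting inequalities gives a differential inequality of the form $\frac{d}{dt}\big(\norm{u}_{V_0}^2+\norm{\theta}_{V_1}^2\big)\le g(t)\big(\norm{u}_{V_0}^2+\norm{\theta}_{V_1}^2\big)+h(t)$, in which, by the previous step, $g$ and $h$ have uniformly bounded integrals over unit-length time windows. The classical uniform Gronwall lemma then delivers precisely the advertised structure $J_0=(a_2+a_3)e^{a_1}$ and $J_1=(b_2+b_3)e^{b_1}$.

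The main difficulty I anticipate is the bookkeeping in this coupled enstrophy estimate: the nonlinear and buoyancy terms must be split so that every top-order factor is absorbed into $\nu\norm{A_0u}_{\LpP2}^2$ or $\kappa\norm{A_1\theta}_{\LpP2}^2$, while the surviving lower-order coefficients are exactly the time-integrable quantities $\norm{u}_{V_0}^2$ and $\norm{\theta}_{V_1}^2$ controlled in the previous step, so that the hypotheses of the uniform Gronwall lemma are genuinely met. A secondary subtlety to keep in mind is that $A_0$ has a nontrivial kernel (the zero horizontal mode), so the Poincar\'e inequality is unavailable for $u_1$; this is precisely why Lemma~\ref{div_lemma} is used to recover control of $u_2$ from $\norm{\nabla u_1}_{\LpP2}$, and why the coercivity in the velocity estimates should be invoked only through the gradient, following the analysis in \cite{Foias_Manley_Temam, Temam_1997}.
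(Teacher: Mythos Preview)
The paper does not actually prove this proposition: it is stated without proof as one of the results ``recalled'' from \cite{Foias_Manley_Temam, Temam_1997} (see the paragraph immediately preceding the existence theorem in Section~\ref{pre}). There is therefore no in-paper argument to compare against; your outline is essentially the standard energy/uniform-Gronwall route that those references carry out, and it is the right way to supply a proof if one is required.

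One small point worth tightening in your sketch: when you pass from the $\LpP2$ energy inequality for $u$ to an absorbing ball $\norm{u(t)}_{H_0}\le b_0$, you implicitly use $\norm{\nabla u}_{\LpP2}^2\ge c\norm{u}_{H_0}^2$, which fails because of the kernel of $A_0$ that you yourself flag at the end. The fix is to observe that the spatial mean of $u_1$ is conserved by \eqref{Bous_fun_1} (project the equation onto constants: the nonlinear, pressure, and buoyancy terms all vanish), so the kernel component is fixed by the initial data and the Poincar\'e inequality applies to the complementary part; the absorbing-ball argument then goes through for that part, with the constant $b_0$ picking up the conserved mean. This is consistent with the attractor setting used later in the paper.
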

\bigskip 

\section{Convergence Results}\label{conv}
In this section, we derive conditions under which the approximate solution $(v,\eta)$, of the data assimilation algorithm system \eqref{DA_Bous_fun},  converges to the corresponding unique reference solution $(u,\theta)$ of the B\'enard convection problem \eqref{Bous_fun}, as $t\rightarrow \infty$. 


In functional form the data assimilation algorithm, system \eqref{DA_Bous}, reads as
\begin{subequations}\label{DA_Bous_fun}
\begin{align}
&\od{v}{t} + \nu A_0v + B(v,v) = \cP_\sigma(\eta {\bf e}_2) - \mu \cP_\sigma(I_h(v_1)-I_h(u_1)){\bf e}_1, \label{DA_v}\\ 
& \od{\eta}{t} +\kappa A_1\eta+ \mathcal{B}(v,\eta) - v\cdot{\bf e}_2= 0, \label{DA_e}\\
&v(0)= v_0 \quad \eta(0)= \eta_0. 
\end{align}
\end{subequations}
Here, $(u,\theta)$ is a strong solution of the 2D B\'enard convection problem \eqref{Bous_fun}, in the global attractor $\mathcal{A}$ corresponding to the observable measurements $I_h(u)$, that the above algorithm is designed to recover in a unique fashion.


\begin{theorem}\label{th_conv_1}
Suppose that $I_h$ satisfies the approximation property \eqref{app} and the symmetry property \eqref{sym_1}. Let $(u(t),\theta(t))$, for $t\geq 0$, be a strong solution in the global attractor of \eqref{Bous_fun}. 
\begin{enumerate} 
\item Let $T>0$, $v_0 \in V_0$, and $\eta_0 \in H_1$.  Suppose that $\mu>0$ is large enough such that 
\begin{align}\label{mu_1}
\mu\geq 2(K_1+ \nu),
\end{align}
where $K_1 = K_1(\nu, \kappa, L)$ is a constant defined in \eqref{K_1}, and $h>0$ is small enough such that $4\mu c_0^2 h^2\leq \nu $. Then, \eqref{DA_Bous_fun} has a unique solution $(v,\eta)$ that satisfies 
\begin{subequations}\label{weak}
\begin{align} 
v \in C([0,T];V_0)\cap L^2([0,T];\mathcal{D}(A_0)),\\
\eta\in C([0,T];{H_1})\cap L^2([0,T]; V_1), 
\end{align}
and also
\begin{align}
\od{v}{t} \in L^2([0,T];H_0), \qquad \od{\eta}{t}\in L^2([0,T];{V_1^{'}}). 
\end{align}
\end{subequations}
Moreover, the solution $(v,\eta)$ depends continuously on the initial data in the $V_0\times{H_1}$ norm, and it satisfies $$\norm{u(t)-v(t)}_{V_0}^2 + \norm{\theta(t)-\eta(t)}_{H_1}^2\rightarrow 0,$$ at an 
exponential rate, as $t \rightarrow \infty$.

\item Let $T> 0$, $v_0\in V_0$, and $\eta_0\in V_1$. Suppose that $\mu>0$ is large enough such that 
\begin{align}\label{mu_2}
\mu \geq 2(K_1+\nu) + 2K_2, 
\end{align}
where $K_{i} = K_{i}(\nu, \kappa, L)$, $i=1,2$, are constants defined in \eqref{K_1} and \eqref{K_2}, respectively, and suppose that $h>0$ is small enough such that $4\mu c_0^2 h^2\leq \nu $. Then, \eqref{DA_Bous_fun} has a unique strong solution $(v,\eta)$ that satisfies 
\begin{subequations}\label{strong}
\begin{align} 
v \in C([0,T];V_0)\cap L^2([0,T];\mathcal{D}(A_0)),\\
\eta\in C([0,T];V_1)\cap L^2([0,T]; \mathcal{D}(A_1)), 
\end{align}
and 
\begin{align}
\od{v}{t} \in L^2([0,T];H_0), \qquad \od{\eta}{t}\in L^2([0,T];H_1). 
\end{align}
\end{subequations}
Moreover, the strong solution $(v,\eta)$ depends continuously on the initial data, in the $V_0\times{V_1}$ norm, and it satisfies 
$$\norm{u(t)-v(t)}_{V_0}^2 + \norm{\theta(t)-\eta(t)}_{V_1}^2\rightarrow 0,$$ at an 
exponential rate, as $t \rightarrow \infty$.  
\end{enumerate}
\end{theorem}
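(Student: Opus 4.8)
The plan is to pass to the difference variables $w = u - v$ and $\xi = \theta - \eta$, derive their evolution by subtracting \eqref{DA_Bous_fun} from \eqref{Bous_fun}, and then run energy estimates engineered to yield a differential inequality $\od{Y}{t} + \alpha(t) Y \le 0$, to which Lemma \ref{gen_gron_2} applies with vanishing right-hand side (the data being error free, no genuine forcing survives). Using the splittings $B(u,u) - B(v,v) = B(w,u) + B(v,w)$ and $\mathcal{B}(u,\theta) - \mathcal{B}(v,\eta) = \mathcal{B}(w,\theta) + \mathcal{B}(v,\xi)$, together with $I_h(v_1) - I_h(u_1) = -I_h(w_1)$, the difference system reads
\begin{align*}
&\od{w}{t} + \nu A_0 w + B(w,u) + B(v,w) = \cP_\sigma(\xi\mathbf{e}_2) - \mu\cP_\sigma(I_h(w_1)\mathbf{e}_1), \\
&\od{\xi}{t} + \kappa A_1\xi + \mathcal{B}(w,\theta) + \mathcal{B}(v,\xi) = w\cdot\mathbf{e}_2.
\end{align*}
Well-posedness in the classes \eqref{weak}, \eqref{strong} follows from a Galerkin scheme whose a priori bounds are exactly the estimates below combined with Aubin--Lions compactness; continuous dependence and uniqueness follow by applying the same estimates to the difference of two solutions of \eqref{DA_Bous_fun}.

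For part (1) I would pair the $w$-equation with $w$ and with $A_0 w$ and add, so as to track $\norm{w}_{V_0}^2 = \norm{w}_{H_0}^2 + \norm{\nabla w}_{\LpP{2}}^2$, and pair the $\xi$-equation with $\xi$ to track $\norm{\xi}_{H_1}^2$. The decisive computation is the feedback term: since $A_0 = -\Delta$ in the periodic symmetric setting (Remark \ref{rmk23}) and $\cP_\sigma$ is self-adjoint, $-\mu(\cP_\sigma(I_h(w_1)\mathbf{e}_1), (I + A_0)w)_{H_0} = -\mu(I_h(w_1), w_1) + \mu(I_h(w_1), \Delta w_1)$, and writing $I_h(w_1) = w_1 - (w_1 - I_h(w_1))$ this produces the damping $-\mu\norm{w_1}_{\HpP{1}}^2$ plus error terms absorbed into the dissipation via \eqref{app} and $4\mu c_0^2 h^2 \le \nu$. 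Although only $w_1$ is nudged, Lemma \ref{div_lemma} gives $\norm{w_2}_{\LpP{2}}^2 \le \norm{\nabla w_1}_{\LpP{2}}^2$ and the Poincar\'e inequality applies to $w_2$ (Remark \ref{poincare_velocity}), so control of $\norm{w_1}_{\HpP{1}}$ upgrades to control of the full $\norm{w}_{V_0}$. The nonlinear terms reduce, through the relations \eqref{orth}, \eqref{per_orth}, \eqref{per_orth_2}, \eqref{prop1}, to $(B(w,w), A_0 u)$ and $(\mathcal{B}(w,\xi),\theta)$, which are handled by the Ladyzhenskaya inequality \eqref{L4_to_H1} and Young's inequality; the highest-order factors are absorbed into $\nu\norm{A_0 w}_{\LpP{2}}^2$ and $\kappa\norm{\nabla\xi}_{\LpP{2}}^2$, and the remaining coefficients are bounded using $J_0, J_1$ from Proposition \ref{unif_bounds} and the $L^\infty$ bound of Proposition \ref{max_principle}. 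Crucially, the temperature difference carries \emph{no} feedback, so its decay is driven solely by the diffusion $\kappa A_1\xi$ together with the Poincar\'e inequality \eqref{poincare_1} on $V_1$, once the source $w\cdot\mathbf{e}_2 = w_2$ has been tamed by Lemma \ref{div_lemma}. The two coupling terms $(\cP_\sigma(\xi\mathbf{e}_2), A_0 w)$ and $(w_2, \xi)$ are paired off by Young's inequality, and the factor $2$ in \eqref{mu_1} furnishes exactly the slack needed to keep the net coefficient of $Y = \norm{w}_{V_0}^2 + \norm{\xi}_{H_1}^2$ positive.

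Assembling the two estimates yields $\od{Y}{t} + \alpha(t) Y \le 0$ with $\alpha(t) = c\mu - g(t)$, where $g$ is built from norms of $(u,\theta)$ that are integrable over unit time-intervals along the attractor. The constant $K_1$ packages these nonlinear coefficients so that \eqref{mu_1} forces $\liminf_{t\to\infty}\int_t^{t+\tau}\alpha(s)\,ds \ge \gamma > 0$, while the regularity of strong solutions (Theorem \ref{global_attractor_Bous}) and Proposition \ref{unif_bounds} give the integrability yielding $\limsup_{t\to\infty}\int_t^{t+\tau}\alpha^-(s)\,ds < \infty$; Lemma \ref{gen_gron_2} then delivers the exponential decay of $Y$. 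For part (2) I would additionally pair the $\xi$-equation with $A_1\xi$ to control $\norm{\xi}_{V_1}^2$; the new nonlinear term $(\mathcal{B}(w,\theta), A_1\xi)$ and coupling $(w_2, A_1\xi)$ generate the extra constant $K_2$, hence the strengthened threshold \eqref{mu_2}, and this step requires $\xi_0 \in V_1$, i.e. $\eta_0 \in V_1$, matching the hypothesis.

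The main obstacle is precisely that the feedback acts on the single scalar $w_1$ while the analysis must control the entire state $(w,\xi)$. Everything hinges on extracting genuine dissipation for $w_2$ and for $\xi$ out of the $w_1$-nudging: the divergence-free identity of Lemma \ref{div_lemma} is the linchpin that converts $\norm{\nabla w_1}$-damping into $\norm{w_2}_{\LpP{2}}$-control, which is what renders the temperature coupling $w\cdot\mathbf{e}_2$ harmless and lets the diffusion of $\xi$ close the loop. This is also what forces the velocity estimate to the $V_0$ (gradient) level rather than merely $H_0$, and the careful balancing of the two cross-coupling terms against the nudging damping, uniformly in time along the attractor, is the technical heart of the argument.
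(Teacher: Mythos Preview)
Your treatment of part (1) follows essentially the same route as the paper: pair the $w$-equation with both $w$ and $A_0 w$, pair the $\xi$-equation with $\xi$, exploit the identities \eqref{orth}--\eqref{per_orth_2} to reduce the velocity nonlinearity to $(B(w,u),w)$ and $(B(w,w),A_0u)$, use Lemma \ref{div_lemma} to propagate the $w_1$-damping to $w_2$, and close via Gronwall. One cosmetic difference: the paper uses the \emph{pointwise} bound $\alpha(t)\le K_1$ (available because $(u,\theta)$ lies on the attractor and is therefore uniformly bounded in $H^2$) and then plain Gronwall, rather than the time-averaged route through Lemma \ref{gen_gron_2} that you describe.

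Your plan for part (2), however, has a genuine gap. When you pair the $\xi$-equation with $A_1\xi$ you obtain \emph{two} nonlinear terms, $(\mathcal{B}(w,\theta),A_1\xi)$ and $(\mathcal{B}(v,\xi),A_1\xi)$; you mention only the first. The second does not vanish (there is no analogue of \eqref{per_orth} for the scalar bilinear form), and after Ladyzhenskaya and Young it leaves a contribution of the form $c\norm{v}_{\LpP{2}}^2\norm{v}_{V_0}^2\norm{\nabla\xi}_{\LpP{2}}^2$ on the right-hand side. This term cannot be absorbed by taking $\mu$ large, since there is no nudging on $\xi$; it therefore sits as a genuine source $\beta(t)$ rather than feeding into your $g(t)$. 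The paper handles this by a two-stage bootstrap: first invoke part (1) both to bound $\norm{v(t)}_{V_0}$ uniformly (since $v\to u$ in $V_0$) and to show, via the dissipation in the $H_1$-level estimate for $\xi$, that $\int_t^{t+\tau}\norm{\nabla\xi(s)}_{\LpP{2}}^2\,ds\to 0$; \emph{then} apply Lemma \ref{gen_gron_2} with constant $\alpha$ and this decaying $\beta(t)$ to conclude. Thus Lemma \ref{gen_gron_2} is essential precisely in part (2), not in part (1), and the constant $K_2$ in \eqref{mu_2} packages the coefficient coming from $(\mathcal{B}(w,\theta),A_1\xi)$, while the problematic $(\mathcal{B}(v,\xi),A_1\xi)$ term is dealt with separately through this bootstrap rather than through the size of $\mu$.
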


\begin{proof}
Since we assume that $(u,\theta)$ is a reference solution of system \eqref{Bous}, then it is enough to show the existence and uniqueness of the difference $(w,\xi)=(u-v, \theta-\eta)$. In the proof below, we will drive formal \textit{a-priori} bounds on the difference $(w, \xi)$, under the conditions that $\mu$ is large enough and $h$ is small enough such that $4\mu c_0^2h \leq \nu$. These \textit{a-priori} estimates, together with the global existence and uniqueness of the solution $(u,\theta)$, form the key elements for showing the global existence of the solution $(v,\eta)$ of system \eqref{DA_Bous}. The convergence of the approximate solution $(v,\eta)$ to the exact reference solution $(u,\theta)$ will also be established under the tighter condition on the nudging parameter $\mu$ as stated in \eqref{mu_1}. Uniqueness can then be obtained using similar energy estimates.

The estimates we provide in this proof are formal, but can be justified by the Galerkin approximation procedure
and then passing to the limit while using the relevant compactness theorems. We
will omit the rigorous details of this standard procedure (see, e.g., \cite{Constantin_Foias_1988, Robinson, Temam_2001_Th_Num}) and provide only the formal \textit{a-priori} estimates.
 
As above we define $w = u-v$, $\xi = \theta-\eta$. Then $w$ and $\xi$ satisfy the system\begin{subequations}
\begin{align}
&\od{w}{t} +\nu A_0w +B(v,w)+ B(w,u) = \cP_\sigma(\xi \mathbf{e}_2)- \mu \cP_\sigma (I_h(w_1){\bf e}_1), \label{w}\\
& \od{\xi}{t} - \kappa A_1 \xi +\mathcal{B}(v,\xi) + \mathcal{B}(w,\theta)- w\cdot{\bf e}_2= 0, \label{xi}\\
&w(0) = w_0: = u_0-v_0, \\
&\xi(0) = \xi_0 := \theta_0 - \eta_0. 
\end{align}
\end{subequations}
Taking the ${\LpP{2}}$ inner product of \eqref{w} and \eqref{xi} with $w$ and $\xi$, respectively, we obtain 
\begin{subequations}\label{ode_L2}
\begin{align}
&\frac 12 \od{}{t} \norm{w}_{\LpP{2}}^2 + \nu \norm{\nabla w}_{\LpP{2}}^2 + \left(B(w,u),w\right) = \int_{\Omega} \xi (w\cdot{\bf e}_2)\, dx - \mu (I_h(w_1), w_1), \\
&\frac1 2 \od{}{t} \norm{\xi}_{\LpP{2}}^2 + \kappa\norm{\nabla \xi}_{\LpP{2}}^2 + \left(\mathcal{B}(w,\theta),\xi\right) = \int_{\Omega} \xi (w\cdot{\bf e}_2)\, dx. \label{xi_ode_L2}
\end{align}
\end{subequations}
By H\"older and Young inequalities, Lemma \ref{div_lemma}, and Poincar\'e inequality \eqref{poincare}, we have 
\begin{align}\label{1}
\abs{\int_{\Omega}\xi (w\cdot{\bf e}_2)\, dx} &\leq \norm{w_2}_{\LpP{2}}\norm{\xi}_{\LpP{2}}\notag \\
& \leq \frac{\kappa\lambda_1}{20}\norm{\xi}_{{\LpP{2}}}^2 + \frac{c}{\kappa\lambda_1}\norm{w_2}_{{\LpP{2}}}^2\notag \\
&\leq \frac{\kappa}{20}\norm{\nabla\xi}_{{\LpP{2}}}^2 +  \frac{c}{\kappa\lambda_1}\norm{\nabla w_1}_{\LpP{2}}^2.   
\end{align}
Young inequality and Lemma \ref{div_lemma} yield 
\begin{align}\label{2}
\abs{\left(\mathcal{B}(w,\theta), \xi\right)} &= \abs{\left(\mathcal{B}(w,\xi), \theta\right)} \notag \\
&\leq \norm{\theta}_{\LpP{\infty}}\norm{w}_{\LpP{2}}\norm{\nabla \xi}_{\LpP{2}}\notag \\
& \leq \frac{\kappa}{20}\norm{\nabla\xi}_{\LpP{2}}^2 + \frac{c}{\kappa}\norm{\theta}_{\LpP{\infty}}^2\norm{w}_{\LpP{2}}^2 \notag\\
& \leq \frac{\kappa}{20}\norm{\nabla\xi}_{\LpP{2}}^2 + \frac{c}{\kappa}\norm{\theta}_{\LpP{\infty}}^2\left(\norm{w_1}_{\LpP{2}}^2 + \norm{\nabla w_1}_{\LpP{2}}^2\right). 
\end{align}
Also (thanks to Ladyzhanskaya inequality \eqref{L4_to_H1})
\begin{align}\label{2}
\abs{\left(B(w,u),w\right)} &\leq \norm{\nabla u}_{\LpP{2}}\norm{w}_{\LpP{4}}^2 \notag \\
&\leq c\norm{\nabla u}_{\LpP{2}}\norm{w}_{\LpP{2}}\norm{w}_{{V_0}}\notag \\
&\leq c \norm{\nabla u}_{\LpP{2}}\norm{w}_{\LpP{2}}\left(\norm{w}_{\LpP{2}}^2 + \norm{\nabla w}_{\LpP{2}}^2\right)^{1/2}\notag\\
&\leq c\norm{\nabla u}_{\LpP{2}}\norm{w}_{\LpP{2}}^2 + c \norm{\nabla u}_{\LpP{2}}\norm{w}_{\LpP{2}}\norm{\nabla w}_{\LpP{2}}\notag \\
& \leq  c\norm{\nabla u}_{\LpP{2}}\norm{w}_{\LpP{2}}^2 +\frac {\nu}{20} \norm{\nabla w}_{\LpP{2}}^2 +\frac {c}{\nu} \norm{\nabla u}_{\LpP{2}}^2\norm{w}_{\LpP{2}}^2 \notag\\
&\leq \frac{\nu}{20}\norm{\nabla w}_{\LpP{2}}^2 + c \norm{\nabla u}_{\LpP{2}}\left(1 + \frac{\norm{\nabla u}_{\LpP{2}}}{\nu}\right)\left(\norm{w_1}_{\LpP{2}}^2 + \norm{\nabla w_1}_{\LpP{2}}^2\right). 
\end{align}
Thanks to the assumptions $4\mu c_0^2h^2\leq \nu$ and \eqref{mu_1}, and Young inequality, we have 
\begin{align}\label{4}
-\mu(I_h(w_1),w_1) & = -\mu(I_h(w_1)-w_1, w_1) - \mu \norm{w_1}_{\LpP{2}}^2\notag \\
& \leq \mu \norm{I_h(w_1)-w_1}_{\LpP{2}}\norm{w_1}_{\LpP{2}} - \mu \norm{w_1}_{\LpP{2}}^2\notag \\
&\leq \mu c_0h\norm{w_1}_{\LpP{2}}\norm{w_1}_{\HpP{1}} - \mu \norm{w_1}_{\LpP{2}}^2\notag \\
& \leq \mu c_0^2h^2\norm{w_1}_{\HpP{1}}^2 - \frac{3\mu}{4}\norm{w_1}_{\LpP{2}}^2\notag \\
&\leq \frac{\nu}{4}\left(\norm{w_1}_{\LpP{2}}^2 +\norm{\nabla w_1}_{\LpP{2}}^2\right) -\frac{3\mu}{4}\norm{w_1}_{\LpP{2}}^2\notag \\
&\leq \frac{\nu}{4}\norm{\nabla w_1}_{\LpP{2}}^2 -\frac{5\mu}{8}\norm{w_1}_{\LpP{2}}^2.
\end{align}

Taking the ${\LpP{2}}$-inner product of \eqref{w} with $A_0w=-\Delta w$, and using the orthogonality properties \eqref{per_orth} and \eqref{per_orth_2}, we have
\begin{align}\label{ode_H1}
\frac 12 \od{}{t} \norm{\nabla w}_{\LpP{2}}^2+ \nu \norm{A_0w}_{\LpP{2}}^2 +\left(B(w,w),A_0u\right) &= \int_{\Omega} \xi (A_0w\cdot{\bf e}_2)\, dx \notag \\ &\quad - \mu (I_h(w_1), \Delta w_1).
\end{align} 
Using H\"older inequality and Ladyzenskaya inequality \eqref{L4_to_H1}, we get 
\begin{align*}
\abs{\left(B(w,w),A_0u\right)}&\leq \norm{A_0u}_{\LpP{2}} \norm{w}_{\LpP{4}}\norm{\nabla w}_{\LpP{4}}\notag \\
&\leq c\norm{A_0u}_{\LpP{2}}\norm{w}_{\LpP{2}}^{1/2}\norm{w}_{V_0}\norm{w}_{\HpP{2}}^{1/2}. 
\end{align*}
Thanks to Remark \ref{rmk23}, we have 
\begin{align*}
&\abs{\left(B(w,w),A_0u\right)}\leq c\norm{A_0u}_{\LpP{2}}\norm{w}_{\LpP{2}}^{1/2}\left(\norm{w}_{\LpP{2}}^2 + \norm{\nabla w}_{\LpP{2}}^2\right)^{1/2}\left(\norm{w}_{\LpP{2}}^{2} + \norm{A_0 w}_{\LpP{2}}^{2}\right)^{1/4}\notag \\
& \leq c\norm{A_0u}_{\LpP{2}}\norm{w}_{\LpP{2}}^{1/2}\left(\norm{w}_{\LpP{2}}+ \norm{\nabla w}_{\LpP{2}}\right)\left(\norm{w}_{\LpP{2}}^{1/2} + \norm{A_0 w}_{\LpP{2}}^{1/2}\right)\notag \\
& \leq c \norm{A_0 u}_{\LpP{2}} \left(\norm{w}_{\LpP{2}}^2 + \norm{w}_{\LpP{2}}\norm{\nabla w}_{\LpP{2}} + \norm{w}_{\LpP{2}}^{3/2}\norm{A_0w}_{\LpP{2}}^{1/2} + \norm{w}_{\LpP{2}}^{1/2}\norm{\nabla w}_{\LpP{2}}\norm{A_0 w}_{\LpP{2}}^{1/2}\right).
\end{align*} 
Since $A_0w = -\Delta w$ in periodic boundary conditions, we also have 
\begin{align*}
\norm{\nabla w}_{\LpP{2}}^2 = \int_\Omega \nabla w\cdot \nabla w\, dxdy &= \int_\Omega wA_0 w\, dxdy \notag \\
& \leq \norm{w}_{\LpP{2}}\norm{A_0 w}_{\LpP{2}}. 
\end{align*} 
Thus, Young inequality and Lemma \ref{div_lemma} imply 
\begin{align}\label{1b}
&\abs{\left(B(w,w),A_0u\right)} \leq c\norm{A_0 u}_{\LpP{2}}\left(\norm{w}_{\LpP{2}}^2 + \norm{w}_{\LpP{2}}^{3/2}\norm{A_0w}_{\LpP{2}}^{1/2}+ \norm{w}_{\LpP{2}}\norm{A_0w}_{\LpP{2}}\right)\notag \\
&\quad  \leq \frac{\nu}{20}\norm{A_0w}_{\LpP{2}}^2 + c \norm{A_0 u}_{\LpP{2}}\left(1+\frac{\norm{A_0u}_{\LpP{2}}^{1/3}}{\nu^{1/3}}+ \frac{\norm{A_0u}_{\LpP{2}}}{\nu}\right)\norm{w}_{\LpP{2}}^2\notag\\
& \quad \leq \frac{\nu}{20}\norm{A_0w}_{\LpP{2}}^2 + c \norm{A_0 u}_{\LpP{2}}\left(1+\frac{\norm{A_0u}_{\LpP{2}}^{1/3}}{\nu^{1/3}}+ \frac{\norm{A_0u}_{\LpP{2}}}{\nu}\right)\left(\norm{w_1}_{\LpP{2}}^2 + \norm{\nabla w_1}_{\LpP{2}}^2\right). 
\end{align} 

Also, Young inequality and Lemma \ref{div_lemma} yield 
\begin{align}\label{2b}
\abs{\int_{\Omega} \xi (A_0w\cdot{\bf e}_2)\, dx} &= \abs{\int_{\Omega} \xi \Delta w_2\, dx} \notag \\
&\leq \norm{\nabla w_2}_{\LpP{2}}\norm{\nabla\xi}_{\LpP{2}}\notag \\
&\leq \frac{\kappa}{20}\norm{\nabla \xi}_{\LpP{2}}^2 + \frac{c}{\kappa}\norm{\nabla w_2}_{\LpP{2}}^2\notag \\
& \leq \frac{\kappa}{20}\norm{\nabla \xi}_{\LpP{2}}^2 + \frac{c}{\kappa}\norm{w_2}_{\LpP{2}}\norm{\Delta w_2}_{\LpP{2}}\notag \\
&\leq  \frac{\kappa}{20}\norm{\nabla \xi}_{\LpP{2}}^2 + \frac{\nu}{20}\norm{\Delta w_2}_{\LpP{2}}^2 + \frac{c}{\nu\kappa^2}\norm{w_2}_{\LpP{2}}^2 \notag \\
& \leq  \frac{\kappa}{20}\norm{\nabla \xi}_{\LpP{2}}^2 + \frac{\nu}{20}\norm{\Delta w_2}_{\LpP{2}}^2 + \frac{c}{\nu\kappa^2}\norm{\nabla w_1}_{\LpP{2}}^2. 
\end{align}

Using \eqref{app}, Young inequality and the assumption that $4\mu ch^2 \leq \nu$, we have
\begin{align}\label{3b}
-\mu (I_h(w_1), -\Delta w_1) &= \mu (I_h(w_1) - w_1  , \Delta w_1) - \mu \norm{\nabla w_1}_{\LpP{2}}^2\notag \\
&\leq \mu c_0h\norm{w_1}_{\HpP{1}}\norm{\Delta w_1}_{\LpP{2}} - \mu \norm{\nabla w_1}_{\LpP{2}}^2\notag \\
&\leq \frac{\mu^2ch^2}{2\nu} \left(\norm{w_1}_{\LpP{2}}^2 + \norm{\nabla w_1}_{\LpP{2}}^2 \right) + \frac{\nu}{2}\norm{\Delta w_1}_{\LpP{2}}^2 - \mu \norm{\nabla w_1}_{\LpP{2}}^2\notag \\
& \leq \frac{\nu}{2} \norm{\Delta w_1}_{\LpP{2}}^2 +\frac{\mu}{8}\norm{w_1}_{\LpP{2}}^2 - \frac{7\mu}{8} \norm{\nabla w_1}_{\LpP{2}}^2.   
\end{align} 

Thanks to the Poincar\'e inequality \eqref{poincare_1} (see Remark \ref{poincare_velocity}) we have $\norm{\nabla w_2}_{\LpP{2}}^2 \geq \lambda_1 \norm{w_2}_{\LpP{2}}^2$ and $\norm{\nabla \xi}_{\LpP{2}}^2 \geq \lambda_1 \norm{\xi}_{\LpP{2}}^2$. This implies that 
\begin{align}\label{poincare_estimate_1}
\nu \norm{\nabla w}_{\LpP{2}}^2 + \kappa \norm{\nabla \xi}_{\LpP{2}}^2&= \nu\norm{\nabla w_1}_{\LpP{2}}^2 + \frac \nu 2 \norm{\nabla w_2}_{\LpP{2}}^2 + \frac \nu 2 \norm{\nabla w_2}_{\LpP{2}}^2+ \kappa \norm{\nabla \xi}_{\LpP{2}}^2 \notag \\
& \geq \nu \norm{\nabla w_1}_{\LpP{2}}^2 +  \frac \nu 2\norm{\nabla w_2}_{\LpP{2}}^2+ \frac{\nu\lambda_1}{2} \norm{w_2}_{\LpP{2}}^2+ \kappa \lambda_1\norm{\xi}_{\LpP{2}}^2.
\end{align} 
Since $\nu \norm{A_0 w}_{\LpP{2}}^2 \geq 0$, it follows from equations \eqref{ode_L2} and \eqref{ode_H1} and estimates \eqref{1}--\eqref{4} and \eqref{1b}--\eqref{3b} and \eqref{poincare_estimate_1}: 
\begin{align}\label{ode_conv_1}
&\od{}{t}\left(\norm{w }_{V _0}^2 + \norm{\xi}_{\LpP{2}}^2\right) +\frac{\min\left\{\nu,\kappa\right\}}{8}\left(\norm{\nabla w_1}_{\LpP{2}}^2 + \frac{\norm{\nabla w_2}_{\LpP{2}}^2}{2} +\frac{\lambda_1}{2}\norm{w_2}_{\LpP{2}}^2 + \lambda_1\norm{\xi}_{\LpP{2}}^2\right)\notag\\
&\qquad \leq \od{}{t}\left(\norm{w }_{V _0}^2 + \norm{\xi}_{\LpP{2}}^2\right) +\frac{\min\left\{\nu,\kappa\right\}}{8}\left(\norm{\nabla w}_{\LpP{2}}^2 +\norm{\nabla \xi}_{\LpP{2}}^2\right) \notag \\
& \qquad\leq (\alpha(t)-\mu)\left(\norm{w_1}_{\LpP{2}}^2 + \norm{\nabla w_1}_{\LpP{2}}^2\right), 
\end{align} 
where 
\begin{align}\label{alpha}
\alpha(t) &:= \frac{c}{\kappa \lambda_1} + \frac{c}{\nu\kappa^2} + \frac{c}{\kappa}\norm{\theta(t)}_{\LpP{\infty}}^2 + c \norm{\nabla u(t)}_{\LpP{2}}\left(1 + \frac{\norm{\nabla u(t)}_{\LpP{2}}}{\nu}\right) \notag\\& \quad + c \norm{A_0 u(t)}_{\LpP{2}}\left(1+\frac{\norm{A_0u(t)}_{\LpP{2}}^{1/3}}{\nu^{1/3}}+ \frac{\norm{A_0u(t)}_{\LpP{2}}}{\nu}\right). 
\end{align}

Since by assumption $(u,\theta)$ is a solution that is contained in the global attractor of \eqref{DA_Bous_fun}, by Proposition \ref{max_principle} and Proposition \ref{unif_bounds}, we conclude that there exist a positive constants $K_1= K_1(\nu,\kappa,\lambda_1,L)$ 
such that for all $t\in \mathbb{R}$
\begin{align}\label{K_1}
\alpha(t) \leq K_1. 
\end{align}
Then, assumption \eqref{mu_1}
implies that $\mu - \alpha(t) \geq \frac{\mu}{2}$, for all $t\geq 0$. 
Thus, thanks to the Poincar\'e inequality \eqref{poincare}, inequality \eqref{ode_conv_1} implies
\begin{align}\label{ode_conv_2}
\od{}{t} \left(\norm{w}_{V_0}^2 + \norm{\xi}_{H_1}^2\right) + \gamma\left(\norm{w}_{V_0}^2 + \norm{\xi}_{H_1}^2\right)\leq 0,
\end{align}
where $\gamma =  \min\left\{\frac{\nu}{16},\frac{\nu\lambda_1}{16},\frac{\kappa\lambda_1}{8},\frac{\mu}{2}\right\}$. 
By Gronwall's inequality, it follows that
\begin{align}\label{asym_bound}
\norm{w(t)}_{V_0}^2 + \norm{\xi(t)}_{H_1}^2 \leq \left(  \norm{w(0)}_{V_0}^2 + \norm{\xi(0)}_{H_1}^2\right) e^{-\gamma t}, 
\end{align}
for every $t\geq 0$. 
Next we prove the second part of the theorem. \\

Taking the $\LpP{2}$-inner product of \eqref{xi} with $A_1\xi = -\Delta\xi$, we have 
\begin{align}\label{ode2}
\frac{1}{2}\od{}{t}\norm{\nabla \xi}_{\LpP{2}}^2 + \kappa\norm{\Delta \xi}_{\LpP{2}}^2 +\left(\mathcal{B}(w,\theta),-\Delta \xi\right) + \left(\mathcal{B}(v,\xi),-\Delta \xi\right) \leq (w_2, -\Delta \xi).
\end{align}
Using  Cauchy-Schwarz and Young inequality, and Lemma \ref{div_lemma}, we have
\begin{align}\label{1c}
\abs{(w_2, -\Delta \xi)} \leq  \norm{w_2}_{\LpP{2}}^2 \norm{\Delta\xi}_{\LpP{2}}^2 &\leq\frac{\kappa}{20}\norm{\Delta\xi}_{\LpP{2}}^2 + \frac{c}{\kappa}\norm{w_2}_{\LpP{2}}^2 \notag\\
& \leq \frac{\kappa}{20}\norm{\Delta\xi}_{\LpP{2}}^2 +\frac{c}{\kappa}\norm{\nabla w_1}_{\LpP{2}}^2 . 
\end{align}
Using H\"older inequality, Ladyzhenskaya inequality \eqref{L4_to_H1} and Lemma \ref{div_lemma}, we get
\begin{align}\label{2c}
\abs{\left(\mathcal{B}(w,\theta),-\Delta \xi\right)}&\leq \norm{w}_{\LpP{4}}\norm{\nabla \theta}_{\LpP{4}}\norm{\Delta \xi}_{\LpP{2}}\notag \\ 
& \leq \frac{\kappa}{20}\norm{\Delta\xi}_{\LpP{2}}^2 + \frac{c}{\kappa}\norm{w}_{\LpP{4}}^2 \norm{\nabla\theta}_{\LpP{4}}^2\notag\\
& \leq \frac{\kappa}{20}\norm{\Delta\xi}_{\LpP{2}}^2 + \frac{c}{\kappa}\norm{w}_{\LpP{2}}\norm{w}_{H^1}\norm{\nabla\theta}_{\LpP{2}}\norm{\Delta\theta}_{\LpP{2}}\notag \\
& \leq \frac{\kappa}{20}\norm{\Delta \xi}_{\LpP{2}}^2 + \frac{c}{\kappa}\norm{w}_{\LpP{2}}\left(\norm{w}_{\LpP{2}}^2+\norm{\nabla w}_{\LpP{2}}^2\right)^{1/2}\norm{\nabla\theta}_{\LpP{2}}\norm{\Delta\theta}_{\LpP{2}}\notag \\
& \leq \frac{\kappa}{20}\norm{\Delta\xi}_{\LpP{2}}^2 + \frac{c}{\kappa}\left(\norm{w}_{\LpP{2}}^2+\norm{\nabla w}_{\LpP{2}}\norm{w}_{\LpP{2}}\right)\norm{\nabla\theta}_{\LpP{2}}\norm{\Delta\theta}_{\LpP{2}}\notag \\
& \leq \frac{\kappa}{20}\norm{\Delta\xi}_{\LpP{2}}^2 + \frac{\nu}{20}\norm{\nabla w}_{\LpP{2}}^2+ \frac{c}{\kappa}\norm{\nabla\theta}_{\LpP{2}}\norm{\Delta\theta}_{\LpP{2}}\left(1+\norm{\nabla\theta}_{\LpP{2}}\norm{\Delta\theta}_{\LpP{2}}\right)\norm{w}_{\LpP{2}}^2\notag \\
&\leq \frac{c}{\kappa}\norm{\nabla\theta}_{\LpP{2}}\norm{\Delta\theta}_{\LpP{2}}\left(1+\norm{\nabla\theta}_{\LpP{2}}\norm{\Delta\theta}_{\LpP{2}}\right)\left(\norm{w_1}_{\LpP{2}}^2+\norm{\nabla w_1}_{\LpP{2}}^2\right) \notag \\
&\qquad +\frac{\kappa}{20}\norm{\Delta \xi}_{\LpP{2}}^2 + \frac{\nu}{20}\norm{\nabla w}_{\LpP{2}}^2.
\end{align}
The H\"older inequality and Ladyzhenskaya inequality \eqref{L4_to_H1} also yield 
\begin{align}\label{3c}
\abs{\left(\mathcal{B}(v,\xi),\Delta \xi)\right)} &\leq \norm{v}_{\LpP{4}}\norm{\nabla \xi}_{\LpP{4}}\norm{\Delta \xi}_{\LpP{2}}\notag \\
& \leq c\norm{v}_{\LpP{2}}^{1/2}\norm{v}_{V_0}^{1/2}\norm{\nabla \xi}_{\LpP{2}}^{1/2}\norm{\Delta\xi}_{\LpP{2}}^{3/2}\notag \\
& \leq \frac{\kappa}{20}\norm{\Delta \xi}_{\LpP{2}}^2 + c\norm{v}_{\LpP{2}}^2\norm{v}_{V_0}^2\norm{\nabla \xi}_{\LpP{2}}^2. 
\end{align}

Since $\nu \norm{A_0 w}_{\LpP{2}}^2 \geq 0$, we conclude from equations \eqref{ode_L2}, \eqref{ode_H1} and \eqref{ode2}, and estimates \eqref{1}--\eqref{4}, \eqref{1b}--\eqref{3b}, and \eqref{1c}--\eqref{3c}, that
\begin{align}\label{ode_V}
&\od{}{t}\left(\norm{w }_{V _0}^2 + \norm{\nabla\xi}_{\LpP{2}}^2\right)+\frac{\min\left\{\nu,\kappa\right\}}{8}\left(\norm{\nabla w}_{\LpP{2}}^2 + \norm{\Delta \xi}_{\LpP{2}}^2\right)\notag\\
& \quad \leq (\tilde{\alpha}(t)-\mu)\left(\norm{w_1}_{\LpP{2}}^2 + \norm{\nabla w_1}_{\LpP{2}}^2\right) +2\norm{v}_{\LpP{2}}^2\norm{v}_{V_0}^2\norm{\nabla \xi}_{\LpP{2}}^2, 
\end{align}
where 
\begin{align}\label{alpha_prime}
\tilde{\alpha}(t) &:= \frac{c}{\kappa \lambda_1} + \frac{c}{\kappa} + c \norm{\nabla u(t)}_{\LpP{2}}\left(1 + \frac{\norm{\nabla u(t)}_{\LpP{2}}}{\nu}\right) \notag\\& \quad + c \norm{A_0 u(t)}_{\LpP{2}}\left(1+\frac{\norm{A_0u(t)}_{\LpP{2}}^{1/3}}{\nu^{1/3}}+ \frac{\norm{A_0u(t)}_{\LpP{2}}}{\nu}\right) \notag\\ &\quad + \frac{c}{\kappa}\norm{\nabla\theta}_{\LpP{2}}\norm{\Delta\theta}_{\LpP{2}}\left(1+\norm{\nabla\theta}_{\LpP{2}}\norm{\Delta\theta}_{\LpP{2}}\right) \notag \\
& \leq \alpha(t) + \frac{c}{\kappa}\norm{\nabla\theta}_{\LpP{2}}\norm{\Delta\theta}_{\LpP{2}}\left(1+\norm{\nabla\theta}_{\LpP{2}}\norm{\Delta\theta}_{\LpP{2}}\right), 
\end{align}
where $\alpha(t)$ is defined in \eqref{alpha}. Since $(u,\theta)$ is the reference solution is assumed to be contained in the global attractor of \eqref{Bous_fun}, then by Proposition \ref{unif_bounds}, there exists a constant $K_2=K_2(\nu,\kappa,\lambda_1,L)$ such that, for all $t\in\nR$, 
\begin{align}\label{K_2} 
\tilde{\alpha}(t) \leq K_1 + K_2,
\end{align}
where $K_1$ is a constant defined in \eqref{K_1}. 
By the first part of the theorem, $v(t)$ is a global solution of \eqref{DA_Bous} belongs to $C([0,T], V_0)$ for any $T>0$. Moreover, assumption \eqref{mu_2} implies that $\norm{u(t)-v(t)}_{V_0}^2 \rightarrow 0$, as $t\rightarrow\infty$. Then, by Proposition \ref{unif_bounds}
\begin{align}\label{K_3}
\norm{v(t)}_{V_0}^2 \leq K_3, 
\end{align} 
for some constant $K_3= K_3(\nu,\kappa,L)$, for all $t\geq 0$.

Now, assumption \eqref{mu_2}, equation \eqref{ode_V} and estimates \eqref{alpha_prime}--\eqref{K_3} yield that $\mu-\alpha(t)\geq \frac{\mu}{2}>0$, for $t\geq 0$, and thus
\begin{align}
\od{}{t}\left(\norm{w }_{V _0}^2 + \norm{\nabla\xi}_{\LpP{2}}^2\right) &+\frac{\min\left\{\nu,\kappa\right\}}{8}\left(\norm{\nabla w}_{\LpP{2}}^2 + \norm{\Delta \xi}_{\LpP{2}}^2\right) \notag\\
& + \frac{\mu}{2}\left(\norm{w_1}_{\LpP{2}}^2 + \norm{\nabla w_1}_{\LpP{2}}^2\right) \leq cK_3^2 \norm{\nabla\xi}_{\LpP{2}}^2. 
\end{align}  
Thanks to Poincar\'e inequalities \eqref{poincare_1} and \eqref{poincare_2}, by a similar argument as in \eqref{poincare_estimate_1}, we have
\begin{align*}
\od{}{t}\left(\norm{w }_{V _0}^2 + \norm{\nabla\xi}_{\LpP{2}}^2\right) &+\frac{\min\left\{\nu,\kappa\right\}}{8}\left(\norm{\nabla w_1}_{\LpP{2}}^2 +\frac{\norm{\nabla w_2}_{\LpP{2}}^2}{2} +\frac{\lambda_1}{2}\norm{w_2}_{\LpP{2}}^2 + \lambda_1\norm{\nabla \xi}_{\LpP{2}}^2\right) \notag\\
& + \frac{\mu}{2}\left(\norm{w_1}_{\LpP{2}}^2 + \norm{\nabla w_1}_{\LpP{2}}^2\right) \leq cK_3^2 \norm{\nabla\xi}_{\LpP{2}}^2, 
\end{align*}  
which implies that
\begin{align}\label{conv_2_2}
\od{}{t}\left(\norm{w }_{V _0}^2 + \norm{\nabla\xi}_{\LpP{2}}^2\right) &+\min\left\{\frac{\nu}{16},\frac{\nu\lambda_1}{16}, \frac{\kappa}{8}, \frac{\mu}{2}\right\}\left(\norm{w}_{V_0}^2 + \norm{\nabla \xi}_{\LpP{2}}^2\right)\notag \\ &\leq cK_3^2 \norm{\nabla\xi}_{\LpP{2}}^2. 
\end{align}  

Next, we observe that since $\nu \norm{\nabla w}_{\LpP{2}}^2 \geq 0$, under the assumption \eqref{mu_2} on $\mu$, equations \eqref{xi_ode_L2} and \eqref{ode_conv_1} imply that 
\begin{align}\label{3.32}
\od{}{t}\left(\norm{w}_{V_0}^2 + \norm{\xi}_{\LpP{2}}^2 \right) + \frac{\kappa}{8}\norm{\nabla \xi}_{\LpP{2}}^2 \leq 0. 
\end{align}
Integrating \eqref{3.32} over the interval $(t, t+\tau)$ and using estimate \eqref{asym_bound}, we conclude that
\begin{align}
\frac{\kappa}{8} \int_t^{t+\tau} \norm{\nabla \xi(s)}^2_{\LpP{2}} \,ds \rightarrow 0, \quad\text{as}\quad t\rightarrow \infty,
\end{align} 
for every $\tau>0$. Now we apply the general Gronwall Lemma \ref{gen_gron_2} to equation \eqref{conv_2_2}, while taking in Lemma \ref{gen_gron_2} $\alpha(t)$ $=$ $\min\left\{\frac{\nu}{16},\frac{\nu\lambda_1}{16}, \frac{\kappa}{8}, \frac{\mu}{2}\right\}$ and $\beta(t) = cK_3^2 \norm{\nabla\xi(t)}_{\LpP{2}}^2$, we conclude that
\begin{align}\label{exp}
\left(\norm{w(t) }_{V _0}^2 + \norm{\nabla\xi(t)}_{\LpP{2}}^2\right)\rightarrow 0,  
\end{align}
at an exponential rate, as $t\rightarrow \infty$. 
That is, 
$$\norm{u(t)-v(t)}_{V_0}^2 + \norm{\theta(t)-\eta(t)}_{	V_1}^2\rightarrow 0,$$ at an 
exponential rate, as $t \rightarrow \infty$. 

\end{proof}

\begin{theorem}\label{th_conv_2}
Suppose that $I_h$ satisfies the approximation property \eqref{app2} and the symmetry property \eqref{sym_1}. Let $(u(t),\theta(t))$, for $t\geq 0$, be a strong solution in the global attractor of \eqref{Bous_fun}. 
\begin{enumerate} 
\item Let $T>0$, $v_0 \in V_0$, and $\eta_0 \in H_1$.  Suppose that $\mu>0$ is large enough such that condition \eqref{mu_1} holds, and $h>0$ is small enough such that $2\mu c_0^2 h^2 \leq \frac{\nu}{16}$. Then, \eqref{DA_Bous_fun} has a unique solution $(v,\eta)$ that satisfies the regularity properties \eqref{weak}. 

Moreover, the solution $(v,\eta)$ depends continuously on the initial data in the $V_0\times{H_1}$ norm, and it satisfies $$\norm{u(t)-v(t)}_{V_0}^2 + \norm{\theta(t)-\eta(t)}_{H_1}^2\rightarrow 0,$$ at an 
exponential rate, as $t \rightarrow \infty$.

\item Let $T> 0$, $v_0\in V_0$, and $\eta_0\in V_1$. Suppose that $\mu>0$ is large enough such that condition \eqref{mu_2} holds, and suppose that $h>0$ is small enough such that $2\mu c_0^2 h^2 \leq \frac{\nu}{16}$. Then, \eqref{DA_Bous_fun} has a unique strong solution $(v,\eta)$ that satisfies the regularity properties \eqref{strong}. 

Moreover, the strong solution $(v,\eta)$ depends continuously on the initial data, in the $V_0\times{V_1}$ norm, and it satisfies 
$$\norm{u(t)-v(t)}_{V_0}^2 + \norm{\theta(t)-\eta(t)}_{V_1}^2\rightarrow 0,$$ at an 
exponential rate, as $t \rightarrow \infty$.  
\end{enumerate}
\end{theorem}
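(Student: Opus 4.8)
The plan is to follow the proof of Theorem~\ref{th_conv_1} essentially verbatim, since the only place where the interpolation property of $I_h$ is actually invoked there is in the two feedback (nudging) estimates \eqref{4} and \eqref{3b}. I again set $w = u - v$ and $\xi = \theta - \eta$, which satisfy the same difference system \eqref{w}--\eqref{xi}, and I test \eqref{w} with $w$ and with $A_0 w$, and \eqref{xi} with $\xi$ and with $A_1\xi$, obtaining exactly the identities \eqref{ode_L2}, \eqref{ode_H1} and \eqref{ode2}. All the estimates of the nonlinear convection and buoyancy terms (the analogues of \eqref{1}, \eqref{1b}, \eqref{2b} and of \eqref{1c}--\eqref{3c}), which do not involve $I_h$, are reused without change, so the constants $K_1$, $K_2$ of \eqref{K_1} and \eqref{K_2} and the thresholds \eqref{mu_1}, \eqref{mu_2} on $\mu$ remain the governing conditions.

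The two estimates that must be redone are the nudging terms. Taking square roots in \eqref{app2} and using $\sqrt{a+b}\le\sqrt a+\sqrt b$ gives
\[
\norm{I_h(w_1) - w_1}_{\LpP{2}} \le c_0^{1/2} h \norm{w_1}_{\HpP{1}} + c_0 h^2 \norm{w_1}_{\HpP{2}},
\]
so the new feature, compared with \eqref{app}, is a term proportional to $h^2\norm{w_1}_{\HpP{2}}$, and by Remark~\ref{poincare_velocity} we have $\norm{w_1}_{\HpP{2}}^2 = \norm{w_1}_{\LpP{2}}^2 + 2\norm{\Delta w_1}_{\LpP{2}}^2$. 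Reproducing the computation \eqref{4} at the $\LpP{2}$ level and the computation \eqref{3b} at the $\HpP{1}$ level, each acquires, after Young's inequality, extra contributions bounded by
\[
\mu c_0 h^2 \norm{w_1}_{\HpP{2}}\norm{w_1}_{\LpP{2}} \quad\text{and}\quad \mu c_0 h^2 \norm{w_1}_{\HpP{2}}\norm{\Delta w_1}_{\LpP{2}},
\]
that is, by $\mu c_0 h^2$ times $\norm{\Delta w_1}_{\LpP{2}}^2$ plus lower-order pieces in $\norm{w_1}_{\LpP{2}}^2$ and $\norm{\nabla w_1}_{\LpP{2}}^2$.

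The crux is where these $H^2$-type terms are absorbed, and this is the main obstacle. In Theorem~\ref{th_conv_1} the dissipation $\nu\norm{A_0 w}_{\LpP{2}}^2$ in \eqref{ode_H1} was simply discarded as nonnegative; here it must instead be retained, since $\norm{A_0 w}_{\LpP{2}}^2 = \norm{\Delta w}_{\LpP{2}}^2 \ge \norm{\Delta w_1}_{\LpP{2}}^2$ is precisely the reservoir into which the new $\norm{\Delta w_1}_{\LpP{2}}^2$ contributions are absorbed. The delicate point is that the extra term arising from the $\LpP{2}$-level estimate \eqref{4} already involves $\norm{\Delta w_1}_{\LpP{2}}$, for which there is \emph{no} dissipation at the $\LpP{2}$ level; hence the two energy levels \eqref{ode_L2} and \eqref{ode_H1} must be summed \emph{before} any term is dropped, and one must check that the single budget $\nu\norm{A_0 w}_{\LpP{2}}^2$ dominates the sum of all $\norm{\Delta w}_{\LpP{2}}^2$-contributions coming from \eqref{1b}, \eqref{2b}, \eqref{3b} together with the two new ones. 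The hypothesis $2\mu c_0^2 h^2 \le \nu/16$ is exactly the smallness that makes the new coefficients fit inside this budget, while the residual lower-order pieces in $\norm{w_1}_{\LpP{2}}^2$ and $\norm{\nabla w_1}_{\LpP{2}}^2$ are swallowed by the negative feedback multiples of $\norm{w_1}_{\LpP{2}}^2$ and $\norm{\nabla w_1}_{\LpP{2}}^2$ produced in \eqref{4} and \eqref{3b}, exactly as before.

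Once the modified feedback estimates are in place, the differential inequalities \eqref{ode_conv_1} and \eqref{ode_V} are recovered with the \emph{same} $\alpha(t)$ and $\tilde\alpha(t)$, so the attractor bounds of Proposition~\ref{max_principle} and Proposition~\ref{unif_bounds} again give the same $K_1$, $K_2$, and conditions \eqref{mu_1}, \eqref{mu_2} still yield $\mu-\alpha(t)\ge \mu/2$. The weak case (part~1) then follows from Gronwall's inequality as in \eqref{asym_bound}, and the strong case (part~2), for which the additional estimates \eqref{1c}--\eqref{3c} do not involve $I_h$ and are unchanged, follows by applying the general Gronwall Lemma~\ref{gen_gron_2} to \eqref{conv_2_2} with $\beta(t)=cK_3^2\norm{\nabla\xi(t)}_{\LpP{2}}^2$ and the bound \eqref{K_3}, giving exponential decay of $\norm{w}_{V_0}^2 + \norm{\nabla\xi}_{\LpP{2}}^2$. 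Existence, uniqueness and continuous dependence on the data are deduced from the same a priori bounds via the Galerkin procedure, just as in Theorem~\ref{th_conv_1}.
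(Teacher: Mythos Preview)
Your proposal is correct and follows essentially the same approach as the paper: the paper's proof states explicitly that it is identical to that of Theorem~\ref{th_conv_1} except for redoing the two feedback estimates \eqref{4} and \eqref{3b}, and it handles the new $\norm{\Delta w_1}_{\LpP{2}}^2$ contributions exactly as you describe, absorbing them into the dissipation $\nu\norm{A_0 w}_{\LpP{2}}^2$ from \eqref{ode_H1} after summing the $\LpP{2}$ and $\HpP{1}$ levels, with the smallness $2\mu c_0^2 h^2 \le \nu/16$ ensuring the coefficients fit. Your identification of the key obstacle---that the $\LpP{2}$-level nudging estimate now produces an $\HpP{2}$ term with no absorber at that level, forcing the two levels to be combined before any dissipation is dropped---is precisely the point.
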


\begin{proof} 
The proof of this theorem is identical to the proof of Theorem \ref{th_conv_1} except for estimates \eqref{4} and \eqref{3b}. We will reproduce the adequate versions of these two estimates here. 

When the interpolant operator $I_h$ satisfies \eqref{app2}, using Young inequality, instead of the treatment in \eqref{4} we have  
\begin{align*}
&-\mu (I_h(w_1),w_1) = -\mu(I_h(w_1)-w_1, w_1) - \mu \norm{w_1}_{\LpP{2}}^2 \notag \\
& \qquad \qquad \leq \mu \norm{I_h(w_1)-w_1}_{\LpP{2}} \norm{w_1}_{\LpP{2}} - \mu\norm{w_1}_{\LpP{2}}^2\notag \\
&\qquad \qquad  \leq \mu c_0h\norm{w_1}_{H^1}\norm{w_1}_{\LpP{2}} + \mu c_0^2h^2\norm{w_1}_{H^2}\norm{w_1}_{\LpP{2}} - \mu\norm{w_1}_{\LpP{2}}^2 \notag \\
& \qquad \qquad \leq 2\mu c_0^2h^2\norm{w_1}_{H^1}^2 + \frac{\mu}{8}\norm{w_1}_{\LpP{2}}^2 + \frac{2\mu^2 c_0^4h^4}{\nu}\norm{w_1}_{H^2}^2 + \frac{\nu}{8}\norm{w_1}_{\LpP{2}}^2 - \mu\norm{w_1}_{\LpP{2}}^2. 
\end{align*}
Recall that, (see Remark \ref{poincare_velocity}), $\norm{w_1}_{H^2}^2 = \norm{w_1}_{H^1}^2 + 2\norm{\Delta w_1}_{\LpP{2}}^2$. Thus, thanks to the assumption $2\mu c_0^2 h^2 \leq \frac{\nu}{16}$, we get 
\begin{align*}
&-\mu (I_h(w_1),w_1)\notag \\ 
&\qquad \leq \left(2\mu c_0^2h^2 + \frac{2\mu^2 c_0^4 h^4}{\nu}\right)\norm{w_1}_{H^1}^2 +\frac{\nu}{8}\norm{w_1}_{\LpP{2}}^2 - \frac{7}{8}\mu \norm{w_1}_{\LpP{2}}^2 + \frac{2\mu^2 c_0^4h^4}{\nu}\norm{\Delta w_1}_{\LpP{2}}^2 \notag \\
& \qquad \leq \left(\frac{\nu}{8} +\frac{\nu}{128}\right) \norm{w_1}_{H^1}^2  + \frac{\nu}{8}\norm{w_1}_{\LpP{2}}^2 - \frac{7}{8}\mu \norm{w_1}_{\LpP{2}}^2 + \frac{\nu}{128}\norm{\Delta w_1}_{\LpP{2}}^2\notag \\
& \qquad \leq \frac{\nu}{4}\norm{w_1}_{H^1}^2 -\frac{7}{8}\mu\norm{w_1}_{\LpP{2}}^2 +\frac{\nu}{128}\norm{\Delta w_1}_{\LpP{2}}^2\notag \\
& \qquad = \frac{\nu}{4}\norm{\nabla w_1}_{\LpP{2}}^2 +\left(\frac{\nu}{4} -\frac{7}{8}\mu\right)\norm{w_1}_{\LpP{2}}^2 +\frac{\nu}{128}\norm{\Delta w_1}_{\LpP{2}}^2. 
\end{align*} 
Assumption \eqref{mu_1} implies that
\begin{align}\label{4_second}
-\mu (I_h(w_1),w_1) \leq \frac{\nu}{4}\norm{\nabla w_1}_{\LpP{2}}^2 -\frac{3}{4}\mu\norm{w_1}_{\LpP{2}}^2 +\frac{\nu}{128}\norm{\Delta w_1}_{\LpP{2}}^2. 
\end{align}

Instead of the treatment in \eqref{3b}, by using Young inequality, we have
\begin{align*}
&-\mu(I_h(w_1), -\Delta w_1) = \mu(I_h(w_1)-w_1), \Delta w_1) - \mu \norm{\nabla w_1}_{\LpP{2}}^2\notag \\
&\qquad \leq \mu c_0h\norm{w_1}_{H^1}\norm{\Delta w_1}_{\LpP{2}} + \mu c_0^2h^2\norm{w_1}_{H^2}\norm{\Delta w_1}_{\LpP{2}} - \mu \norm{\nabla w_1}_{\LpP{2}}^2 \notag \\
& \qquad \leq \frac{2\mu^2 c_0^2 h^2}{\nu}\norm{w_1}_{H^1}^2 + \frac{2\mu^2c_0^4h^4}{\nu}\norm{w_1}_{H^2}^2 + \frac{\nu}{4}\norm{\Delta w_1}_{\LpP{2}}^2 - \mu\norm{\nabla w_1}_{\LpP{2}}^2. 
\end{align*}
Since $\norm{w_1}_{H^2}^2 = \norm{w_1}_{H^1}^2 + 2\norm{\Delta w_1}_{\LpP{2}}^2$, the assumption $2\mu c_0^2 h^2 \leq \frac{\nu}{16}$ yields 
\begin{align*}
&-\mu(I_h(w_1), -\Delta w_1) \leq \left(\frac{\mu}{16} + \frac{\nu}{512}\right)\norm{w_1}_{H^1}^2 + \left(\frac{\nu}{512} + \frac{\nu}{4}\right)\norm{\Delta w_1}_{\LpP{2}}^2 - \mu\norm{\nabla w_1}_{\LpP{2}}^2\notag \\
& \qquad \leq \frac{3\nu}{8} \norm{\Delta w_1}_{\LpP{2}}^2 +  \left(\frac{\mu}{16} + \frac{\nu}{512}\right)\norm{w_1}_{\LpP{2}}^2 +  \left(\frac{\mu}{16} + \frac{\nu}{512}-\mu\right)\norm{\nabla w_1}_{\LpP{2}}^2. 
\end{align*}
Thus, condition \eqref{mu_1} implies that 
\begin{align}\label{3b_second} 
&-\mu(I_h(w_1), -\Delta w_1) \leq \frac{3\nu}{8} \norm{\Delta w_1}_{\LpP{2}}^2 +\frac{\mu}{8}\norm{w_1}_{\LpP{2}}^2 - \frac{7\mu}{8}\norm{\nabla w_1}_{\LpP{2}}^2. 
\end{align}

The rest of the proof of the theorem follows the proof of Theorem \ref{th_conv_1} while replacing \eqref{4} and \eqref{3b} by \eqref{4_second} and \eqref{3b_second}, respectively.  

\end{proof} 

\bigskip
\section*{Acknowledgements}

The work of A.F. is supported in part by NSF grant  DMS-1418911. The work of E.L. is supported  by the ONR grant N0001415WX01725.  The work of  E.S.T.  is supported in part by the ONR grant N00014-15-1-2333 and the NSF grants DMS-1109640 and DMS-1109645.  

\bigskip 

\end{document}